\newtheorem{theo}{Theorem}
\newcommand{\xf}{\bm{X}^{(1)}}
\newcommand{\xs}{\bm{X}^{(2)}}
\newcommand{\mso}[1]{\xs_{(#1)}}
\newcommand{\mf}[1]{m_{#1}^{(1)}}
\newcommand{\ms}[1]{m_{#1}^{(2)}}
\newcommand{\mfe}[1]{\hat{m}_{#1}^{(1)}}
\newcommand{\mse}[1]{\hat{m}_{#1}^{(2)}}
\newcommand{\bj}{\bm{j}}
\title {Criterion for the resemblance between the mother and the model distribution}
\author{Yo Sheena\thanks{Faculty of Data Science, Shiga University, Japan; Visiting Professor of the Institute of Statistical Mathematics, Japan}}
\date{\today}
\begin{document}
\maketitle
	
\begin{abstract}
 If the probability distribution model aims to approximate the hidden mother distribution, it is imperative to establish a useful criterion for the resemblance between the mother and the model distributions. 

This study proposes a criterion that measures the Hellinger distance between discretized (quantized) samples from both distributions. Unlike information criteria such as AIC, this criterion does not require the probability density function of the model distribution, which cannot be explicitly obtained for a complicated model such as a deep learning machine. Second, it can draw a positive conclusion (i.e., both distributions are sufficiently close) under a given threshold, whereas a statistical hypothesis test, such as the Kolmogorov-Smirnov test, cannot genuinely lead to a positive conclusion when the hypothesis is accepted.

In this study, we establish a reasonable threshold for the criterion deduced from the Bayes error rate and also present the asymptotic bias of the estimator of the criterion. From these results, a reasonable and easy-to-use criterion is established that can be directly calculated from the two sets of samples from both distributions.
\end{abstract}
\noindent
MSC(2010) \textit{Subject Classification}: Primary 60F99, Secondary 62F12\\
\textit{Keywords and phrases:} Hellinger distance, Kolmogorov-Smirnov test, asymptotic risk, f-divergence, alpha-divergence, Bayes error rate.

\section{Introduction}
\ \ Consider a probability distribution model (the term ``probabilistic generative model'' is also used in some contexts), which generates a random vector. Although the purpose of making a model is quite diverse, here we focus on approximating a pre-existing distribution using the model. We call the preexisting distribution - ``mother distribution'' and the distribution of the model - ``model distribution.'' In typical cases, ``real'' data obtained via observation or an experiment is supposed to be generated from the unknown mother distribution, while the data from the model is a  ``mimic'' but is expected to resemble the real one.
%The $k$-dimensional random vectors that follow respectively the mother distribution and the model distribution are denoted by

Once we obtain a distribution model, the resemblance between the model and the mother distributions must be evaluated. The most commonly used criterion for the closeness between two distributions is divergence. In particular, the Kullback-Leibler divergence
\begin{equation}
\label{KL_dive}
\int_{\mathcal{X}} g_r(x) \log\frac{g_r(x)}{g_m(x)} d\mu(x) = \int_{\mathcal{X}} g_r(x) \log g_r(x) d\mu(x) - \int_{\mathcal{X}} g_r(x) \log g_m(x) d\mu(x)
\end{equation}
is the most frequently used standard, where $g_r(x)$ and $g_m(x)$ are the probability density functions (p.d.f.) of the mother (i.e., real) and model distributions with respect to the common measure $\mu$ over domain $\mathcal{X}$, respectively. In most cases, $g_r(x)$ is unknown and can be estimated by using the sample from the mother distribution. Nonparametric density estimation is a difficult task that requires a large sample size, especially in a high-dimensional distribution, and is not robust because its performance often depends on the derivatives of the density (e.g., Theorem 6.1 of \cite{Scott}). A popular alternative is the average log-likelihood (e.g., \cite{Borji}, \cite{Theis_et_al}); Let $\bm{X}_\tau,\ \tau=1,\ldots, n$ be a sample of size $n$ from the mother distribution.
Subsequently, the average log-likelihood
\[
\frac{1}{n} \sum_{\tau=1}^n \log g_m(\bm{X}_\tau)
\]
is the estimator of minus the second part of the right-hand side of \eqref{KL_dive} (i.e., the cross-entropy between $g_r(x)$ and $g_m(x)$). Since the first part is common among the models, the larger log-likelihood of one model than another indicates that the model is closer to the mother distribution. AIC is given as the bias correction of the average log-likelihood when the parametric model is chosen with the maximum likelihood estimation. However, the average log-likelihood (or information criterion) has the following shortcomings:
\begin{enumerate}
\item A concrete form of $g_m(x)$ is needed, which is difficult to obtain for a model as complex as Bayes model and  multi-layer neural networks (``deep generating models''; refer to \cite{Ruthotto&Haber} for more details) 
\item Though it works for model comparison, the value itself has no statistical meaning; hence, it cannot be used for model evaluation. For example, it gives no clear answer to questions such as whether training is done well, whether the parametric model is appropriate, or whether the sample size is large enough.
\end{enumerate}

A possible remedy for the first shortcoming is evaluating the model's closeness to the mother distribution ``directly'' (not through density estimation)from the two samples:
\begin{equation}
\label{two_sample}
\begin{aligned}
&\text{Observed data of the mother distribution: }\xf \triangleq \{\xf_1,\ldots \xf_{n_1}\},\\
&\text{Generated sample from the model: }\xs \triangleq \{\xs_1,\ldots,\xs_{n_2}\},
\end{aligned}
\end{equation}
where $n_1$ and $n_2$ are the sample sizes. The closeness between the empirical distributions of $\xf$ and $\xs$ guarantees closeness between the mother and model distributions for a sufficiently large sample size. Furthermore, if a reasonable (persuasive) criterion for the closeness between the two empirical distributions is set, the second shortcoming can be overcome. Therefore, we can conclude that the mother and model distributions are close to each other if $\xf$ and $\xs$ satisfy certain closeness criteria.

%Histogram works as a visualization tool for the comparison between the two samples, while the graph of the empirical distribution functions also has the same purpose. Although they can convey the similarity or dissimilarity of two samples intuitively, they do not present a threshold for the closeness between the two samples.

Statistical testing is the most frequently used method in this approach, and nonparametric tests, such as the Kolmogorov-Smirnov (K-S) test, are often used to test the null hypothesis that the two samples are generated from the same distribution. However, statistical testing has its drawbacks. If the null hypothesis is rejected, we can conclude that the two population distributions are not the same, but they still might be quite similar for practical purposes. When the sample sizes are large, the null hypothesis is likely to be rejected because the test detects a slight difference between the samples. Conversely, clear conclusions on closeness cannot be reached if the hypothesis is accepted. Besides acceptance or rejection, the p-value provides some information, but it does not directly address the closeness between the two distributions. 

In this study, a simple criterion is proposed to judge the closeness between the mother and model distributions based on samples $\xf$ and $\xs$. Using these criteria, we can reach the ``positive'' conclusion that the mother and model distributions are sufficiently close. It is often the case that we need the ``positive'' conclusion for closeness rather than the ``negative'' conclusion that the hypothesis rejection draws. To compare the two samples $\xf$ and $\xs$, we adopt discretization (the term ``quantification'' is also used). Discretization is a naive but versatile parameterization that is applicable to any continuous distribution.

Our approach is as follows:
\begin{enumerate}
\item Make the partition of $\mathcal{X}$ based on the value of $\xs$ (each partition is referred to as ``bin'').
\item Make the multinomial empirical distributions by counting the number of individuals that fall into each bin for both $\xf$ and $\xs$.
\item Evaluate the closeness between the two multinomial empirical distributions with the Hellinger distance.
\item Compare the value of the Hellinger distance with a certain threshold and conclude whether the two distributions are sufficiently close or not.
\end{enumerate}
As explained later, the first step is the ``moving region method,''  where the bins are chosen based on sample values, while the ordinary ``fixed region method'' makes bins independent of the samples. 
(refer to \cite{Sheena_4}.)  Note that $\xs$ was used for this method because it is usually much more costly to observe a large amount of data from the mother distribution than to generate a large sample size from the model, which usually incurs only a computational cost (refer to Theorem \ref{bias_two_sample}). 

We briefly introduce relevant works for the comparison between two distributions (not necessarily the mother and model distributions). Richardson and Weiss \cite{Richardson&Weiss} take the most similar approach to the present paper, but they adopt the ``fixed region method.'' They considered the collection of binomial distributions (over one bin and the others) rather than the multinomial distribution and used the z-test for the final evaluation of closeness. Johnson and Dasu \cite{Johnson&Dasu} proposed a comparison based on partitioning high-dimensional data into categorical and continuous variables, using multiple tests to determine the similarity between the two datasets. For an extension of the K-S test for a high-dimensional case, refer to Press and Teukolsky \cite{W. H. Press and S. A. Teukolsky} (two-dimensional), Hagen et al. \cite{Hagen_et.al} (general dimension), and the papers therein. Loudin and Miettinen \cite{Loudin and Miettinen} proposed a method to condense the information of two datasets into a one-dimensional distribution and used a one-dimensional K-S test for the final decision.

Gretton et al. \cite{Gretton_et.al} applied the theory of reproducing kernel Hilbert space to a two-sample test. Their approach is novel as the similarity between two distributions is measured by the similarity between the two corresponding functions. However, the authors \cite{Gretton_et.al} choose ordinary statistical testing for the final evaluation. Theis et al. \cite{Theis_et_al} evaluated probabilistic (image) generating models, showing that some evaluation methods lead to completely different conclusions. Borji \cite{Borji} reviewed the papers thoroughly related to the evaluation measures of generative adversarial networks, some of which are applicable to the two-sample problems of this study.

This research is organized as follows. In the next section, we introduce $f$-divergence and present the relation between $f$-divergence and the Bayes error rate. Using this relation, we can interpret the value of $f$-divergence from a more practical perspective. In Section \ref{dis_methods}, two discretization methods are introduced: the fixed and moving region methods for the case when $k=1$ (Section \ref{Discretization of Scalar}) and the general $k$ (Section \ref{discre_vector_X}). In Section \ref{One Sample Problem}, we state the asymptotic risk for `` the one-sample problem,'' (i.e., the closeness between the true distribution and the predictive distribution), which is closely related to the two-sample problem. In Section \ref{Two Sample Problem}, we propose an estimator of the closeness between the mother and model distributions. Thereafter, the asymptotic bias of the estimator is evaluated. Finally, in Section \ref{criteria}, a simple criterion for the closeness between the two distributions is proposed by combining all the results. We also present the simulation results to demonstrate how these criteria work.
%
%
%
%  Divergence
%
%
%
\section{$f$-Divergence}
\label{dive}
The discrepancy between the two probability distributions can be measured by divergence. Consider the two distributions over $\mathcal{X}$ whose probability density functions with respect to a common measure $d\mu$ are given by $g_1(x)$ and $g_2(x)$, respectively. The $f$-divergence between these distributions is given by:
\begin{equation}
\label{f-divergence}
D_f[g_1(x) \,| \, g_2(x)] \triangleq \int_{\mathcal{X}} g_1(x) f\biggl(\frac{g_2(x)}{g_1(x)}\biggr) d\mu(x),
\end{equation}
where $f$ is a convex function with properties $f(1)=f'(1)=0$ and $f''(1)=1$. (Refer to Csiszar \cite{Csiszar} and Amari \cite{Amari4} for more details on $f$-divergence.)
Specifically, for the two multinomial distributions whose probabilities for each bin are given by $m^{(1)}=(m^{(1)}_0,\ldots,m^{(1)}_p)$ and $m^{(2)}=(m^{(2)}_0,\ldots,m^{(2)}_p)$, respectively, $f$-divergence is given by
\begin{equation}
\label{def_fdive}
D_f[m^{(1)} : m^{(2)}] \triangleq
\sum_{i=0}^p m^{(1)}_i \:f\biggl(\frac{m^{(2)}_i}{m^{(1)}_i}\biggr).
\end{equation}

$f$-divergence $D_f[m^{(1)}:m^{(2)}]$ satisfies the following condition:
\begin{equation}
D_f[m^{(2)} : m^{(1)}]\geq 0,\qquad D_f[m^{(2)} : m^{(1)}]=0 \text{ if and only if $m^{(1)} = m^{(2)}$}.
\end{equation}
However, triangle inequality and symmetricity do not hold true. To satisfy the symmetricity, the symmetrized divergence 
\begin{equation}
\frac{1}{2}\left\{
D_f[m^{(1)} : m^{(2)}]+D_{f^*}[m^{(1)} : m^{(2)}]
\right\}
\end{equation}
was also used (refer to Amari and Cichocki \cite{Amari&Cichocki}), where $f^*(x) \triangleq xf(1/x)$ is the dual function of $f(x)$.
Symmetricity arises from the fact that 
\begin{equation}
\label{dual_f}
D_{f^*}[m^{(1)} : m^{(2)}] = D_f[m^{(2)} : m^{(1)}]
\end{equation}

For a more concrete analysis of the closeness between the two distributions, $\alpha$-divergence is commonly used. $\alpha$-divergence is a one-parameter ($\alpha$) subfamily of $f$-divergence given by \eqref{f-divergence} with $f(x)=f_\alpha(x)$ such that
\begin{equation}
\label{def_alphadive}
f_\alpha(x)\triangleq 
\begin{cases}
\frac{4}{1-\alpha^2}\bigl(1-x^{(1+\alpha)/2}\bigr)+\frac{2}{1-\alpha}(x-1) &\text{ if $\alpha \ne \pm 1$,} \\
x\log x +1-x & \text{ if $\alpha=1$,}\\
-\log x+x-1 & \text{ if $\alpha=-1$.}
\end{cases}
\end{equation}
$\alpha$-divergence is still a broad class that contains frequently used divergences, such as the Kullback-Leibler divergence ($\alpha =-1$), the Hellinger distance ($\alpha=0$), and $\chi^2$-divergence ($\alpha=3$). We will use the notation $\overset{\alpha}{D}[m^{(1)} : m^{(2)}]$ instead of $D_{f_\alpha}[m^{(1)} : m^{(2)}]$. The symmetrized $\alpha$-divergence is denoted by $\overset{|\alpha|}{D}[m^{(1)} : m^{(2)}]$, which is
\[
\overset{|\alpha|}{D}[m^{(1)} : m^{(2)}] = \frac{1}{2}\left\{\overset{\alpha}{D}[m^{(1)} : m^{(2)}]+\overset{-\alpha}{D}[m^{(1)} : m^{(2)}]\right\}
\]
because $D_{f^*_\alpha} = D_{f_{-\alpha}} =\overset{-\alpha}{D}$,

The value of $f$-divergence is difficult to interpret; the value itself of $D_f[ m^{(1)} : m^{(2)}]$ does not indicate how close $m^{(1)}$ and $m^{(2)}$ are. Given a specific value of $f$-divergence, we want to judge whether it is sufficiently small. Afterward, the relation between $f$-divergence and the Bayes error rate is established. This relation enables us to provide a reasonable (plausible) interpretation of the value of $f$-divergence.

If any two distributions (represented by their p.d.f's $g_1(x)$ and $g_2(x)$) are sufficiently close, it is difficult to discriminate samples from both distributions. The Bayes discriminant rule classifies sample $X$ into a distribution with a larger $g_i(X)$. This rule minimizes the error rate (probability), and the minimum rate (called the Bayes error rate) is
\[
Er[g_1(x) | g_2(x) ] \triangleq \frac{1}{2}\int_{\mathcal{X}} \min(g_1(x), g_2(x))d\mu.
\]
The following theorem reveals the relation between $f$-divergence and the Bayes error rate.

Let $\Delta_*(\delta)$ be the solution of the equation with respect to $\Delta(\geq 1)$
\begin{equation}
\label{sol_Delta}
\Bigl(\frac{1}{\Delta}\Bigr)f(\Delta) +
\Bigl(1-\frac{1}{\Delta}\Bigr)f(0)=\delta (>0)
\end{equation}
and
\begin{equation}
\label{def_A}
A(\delta)\triangleq \Bigl\{ (x,t) \,\Big|\, x f\Bigl(\frac{1-2t}{x} + 1\Bigr)+(1-x) f\Bigl(\frac{2t-1}{1-x}+1\Bigr) = \delta,\quad 0< x < 2t < 1\Bigr\}.
\end{equation}
%
%
%  Theorem 1 f-divergence and Bayes error rate
%
%
\begin{theo}
\label{Theo:Error and Div}
If $D_f[g_1(x) \,| \, g_2(x)] < \delta$, then
\begin{equation}
\label{rel_error_D}
Er[g_1(x)\, | \, g_2(x)] \geq \alpha(\delta),
\end{equation}
where 
\[
\alpha(\delta) = \min\bigl(1/(2\Delta_*(\delta)), \inf\{ t \,|\, (x,t) \in A(\delta) \}\bigr).
\]
\end{theo}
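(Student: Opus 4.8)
My plan is to reduce the functional inequality to a two-bin inequality using the data-processing (coarse-graining) property of $f$-divergence, and then to analyse the resulting explicit function by elementary monotonicity together with the intermediate value theorem. \emph{Binary reduction.} Set $E \triangleq Er[g_1(x)\,|\,g_2(x)]$ and partition $\mathcal{X}$ along the Bayes rule into $\mathcal{X}_1 = \{g_1 \ge g_2\}$ and $\mathcal{X}_2 = \{g_1 < g_2\}$; writing $a_i = \int_{\mathcal{X}_i} g_1\,d\mu$ and $b_i = \int_{\mathcal{X}_i} g_2\,d\mu$ one has $a_1 + a_2 = b_1 + b_2 = 1$ and $2E = b_1 + a_2$. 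Applying Jensen's inequality to the probability density $g_1/a_i$ on $\mathcal{X}_i$ (the standard ``$f$-divergence decreases under coarse-graining'' estimate, the recession value $f'(\infty) \triangleq \lim_{s\to\infty} f(s)/s$ being used on $\{g_1 = 0\}$) gives
\[
D_f[g_1(x)\,|\,g_2(x)] \;\ge\; a_1 f\!\Bigl(\tfrac{b_1}{a_1}\Bigr) + a_2 f\!\Bigl(\tfrac{b_2}{a_2}\Bigr) \;=\; \phi(a_2, E),
\]
where, after substituting $a_1 = 1-a_2$, $b_1 = 2E - a_2$, $b_2 = 1 - 2E + a_2$, the right-hand side is exactly the function
\[
\phi(x,t) \;\triangleq\; x\, f\!\Bigl(\tfrac{1-2t}{x}+1\Bigr) + (1-x)\, f\!\Bigl(\tfrac{2t-1}{1-x}+1\Bigr),\qquad 0 \le x \le 2t,\ 0 \le t \le \tfrac12,
\]
from \eqref{def_A}, read at $x=0$ as $(1-2t)f'(\infty) + f(2t)$. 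Hence $D_f < \delta$ forces $\phi(a_2, E) < \delta$, and it suffices to prove the purely analytic claim: if $(x,t)$ satisfies $0 \le x \le 2t$, $0 \le t \le \tfrac12$ and $\phi(x,t) < \delta$, then $t \ge \alpha(\delta)$.

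\emph{The dichotomy.} A direct substitution gives $\phi(2t,t) = 2t\,f(\tfrac1{2t}) + (1-2t)f(0) = h(\tfrac1{2t})$, where $h(\Delta) \triangleq \tfrac1\Delta f(\Delta) + (1-\tfrac1\Delta)f(0) = f(0) + \tfrac{f(\Delta)-f(0)}{\Delta}$ is the left-hand side of \eqref{sol_Delta}; since $f$ is convex with $f''(1)=1>0$, the secant slope $\tfrac{f(\Delta)-f(0)}{\Delta}$ is strictly increasing for $\Delta>1$, so $h$ is strictly increasing on $[1,\infty)$ with $h(1)=0$, $\Delta_*(\delta)$ is well defined (for $\delta$ in the range of $h$), and $h(\Delta)\le\delta \iff \Delta\le\Delta_*(\delta)$. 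Now take an admissible pair $(x,t)$ with $\phi(x,t)<\delta$ and compare $\phi(2t,t)$ with $\delta$. If $\phi(2t,t)\le\delta$, then $\tfrac1{2t}\le\Delta_*(\delta)$, i.e.\ $t\ge\tfrac1{2\Delta_*(\delta)}\ge\alpha(\delta)$. If $\phi(2t,t)>\delta$ (possibly $=+\infty$, which occurs precisely when $f(0)=+\infty$), then $\phi(\cdot,t)$ is continuous on $(0,2t)$ and takes a value $<\delta$ somewhere there (at $x$ itself, or on a small right-neighbourhood of $0$ when $x=0$, using $\phi(0^+,t)<\delta$), so the intermediate value theorem produces $y\in(0,2t)$ with $\phi(y,t)=\delta$; then $(y,t)\in A(\delta)$ (the strict constraints $0<y<2t<1$ holding since in this case $t<\tfrac12$), whence $t\ge\inf\{s:(z,s)\in A(\delta)\}\ge\alpha(\delta)$. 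In either case $t\ge\alpha(\delta)$, and applying this to $(a_2,E)$ completes the argument.

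\emph{Anticipated obstacle.} The conceptual content is entirely in the data-processing reduction; the only genuinely delicate point is the uniform treatment of degenerate configurations — the case $a_2=0$ (no $g_1$-mass in $\mathcal{X}_2$) and the possibilities $f(0)=+\infty$ or $f'(\infty)=+\infty$ — where both $D_f$ and the endpoint values of $\phi(\cdot,t)$ must be read through the recession function. I would set up the reduction (e.g.\ via the joint-range formulation of $f$-divergences, or simply with the convention $x f(b/x)\big|_{x=0}=b\,f'(\infty)$) so that the inequality $D_f\ge\phi(a_2,E)$ and all boundary limits used in the intermediate value step hold with no case split. I would also fix the convention for $\Delta_*(\delta)$ when \eqref{sol_Delta} has no root $\Delta>1$ (which happens exactly when $f(0)=+\infty$, e.g.\ for the Kullback--Leibler choice): then $\phi(2t,t)\equiv+\infty$, only the $A(\delta)$ branch of the dichotomy can occur, and the $\Delta_*$ term in $\alpha(\delta)$ is inactive. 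Everything beyond this bookkeeping is routine convex calculus.
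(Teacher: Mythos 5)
Your proof is correct, and it reaches the same two-branch dichotomy (the $\Delta_*$ branch versus the $A(\delta)$ branch) as the paper, but by a genuinely cleaner route. The paper approximates $g_1,g_2$ by step functions over a refining sequence of partitions, groups the likelihood ratios $\Delta_i$ into the sets $\{\Delta_i>1\}$ and $\{\Delta_i<1\}$, replaces each group by its average via convexity of $f$, and then studies the constraint $h(x;\Delta^+)\le\delta$ by showing $h$ is increasing in $x$ on $(0,\tilde 1/\Delta^+)$, before finally letting the partition refine so that $\tilde 1\to 1$. Your data-processing step (Jensen applied to $g_1/a_i\,d\mu$ on the two Bayes regions) collapses all of that into one inequality $D_f\ge\phi(a_2,E)$ with no discretization, no auxiliary quantity $\tilde 1$, and no limit to justify at the end --- which is a real gain, since the paper's closing ``take the limit as $n\to\infty$'' silently handles the passage of strict inequalities and infima to the limit. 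Your endpoint analysis also differs in mechanism (comparing $\phi(x,t)$ with $\phi(2t,t)$ and invoking the intermediate value theorem, rather than the paper's monotonicity of $h(\cdot;\Delta^+)$), but the two are equivalent: the paper's boundary case $\Delta^-=0$ is exactly your $x=2t$, and both identify $h(1/(2t))=\phi(2t,t)$ with the left-hand side of \eqref{sol_Delta}. The degenerate configurations you flag ($a_2=0$, $f(0)=+\infty$, $f'(\infty)=+\infty$, nonexistence of $\Delta_*$) are genuine but equally present and equally unaddressed in the paper's own argument, so your explicit recession-function conventions, if written out, would make your version strictly more careful than the original.
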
 
\begin{proof}
Refer to Appendix. 
\end{proof}
Using the result of the theorem, a reasonable threshold $\delta^*$ can be set for $f$-divergence. First, we choose $\epsilon$ (small values such as 0.05, 0.01) to construct a standard
\[
\frac{1}{2} - \epsilon
\]
for the error rate. This error rate is close to 0.5, i.e. that of the ``random guess''. If the Bayes error rate  equals $0.5-\epsilon$, we can conclude that the two distributions are quite similar.  Let $\delta^*$ denote the solution to the equation
\begin{equation}
\label{eq_del_al}
\alpha(\delta) = \frac{1}{2} - \epsilon
\end{equation}
If the $f$-divergence is smaller than $\delta^*$, the two distributions are so close that they cannot be distinguished well even when the p.d.f.s of the two distributions are known.

In Section \ref{criteria}, we establish the criteria for closeness between the mother and model distributions using the Hellinger distance. In preparation for the section, $\alpha(\delta)$ in Theorem \ref{Theo:Error and Div} is investigated for the Hellinger distance, which is $f$-divergence with
\[
f(x) = 2(1-\sqrt{x})^2.
\]
As $f(0)=2$, \eqref{sol_Delta} is equivalent to
\[
\frac{2}{\Delta}(1-\sqrt{\Delta})^2 + 2\Bigl(1-\frac{1}{\Delta}\Bigr) = \delta,
\]
and the solution is given by:
\begin{equation}
\label{Delta*}
\Delta_*(\delta) = \frac{1}{(1-\delta/4)^2}.
\end{equation}
As $\inf\{ t \,|\, (x,t) \in A(\delta) \}$ is complicated, we approximate it  around $t=1/2$. Through Taylor expansion,
\begin{align*}
f\Bigl(\frac{1-2t}{x}+1\Bigr) &= f(1) + \frac{1-2t}{x}f'(1) + \Bigl(\frac{1-2t}{x}\Bigr)^2\frac{f''(1)}{2} + \cdots\\
&\doteqdot \frac{(1-2t)^2}{2x^2}.
\end{align*}
Similarly, we have
\[
f\Bigl(\frac{2t-1}{1-x}+1\Bigr) \doteqdot \frac{(1-2t)^2}{2(1-x)^2}.
\]
Therefore, we can approximate $A(\delta)$ as
\begin{align*}
A(\delta) &\doteqdot \Bigl\{(x,t)\ \Big| \  (1-2t)^2 \Bigl(\frac{1}{2x} + \frac{1}{2(1-x)}\Bigr) = \delta, \quad 0 < x < 2t <1 \Bigr\}\\
&= \Bigl\{(x,t) \  \Big|\  (1-2t)^2 = 2\delta x(1-x), \quad 0 < x < 2t < 1\Bigr\} \\
&= \Bigl\{(x,t) \  \Big|\  t = \Bigl(1-\Bigl(2\delta x(1-x)\Bigr)^{1/2}\Bigr)/2, \quad 0 < x < 2t < 1\Bigr\}.
\end{align*}
Using this approximation of $A(\delta)$, if $\delta \leq 1/2$, then  $\inf\{t \ | \ (x,t) \in A(\delta) \} = \Bigl(1-\sqrt{\delta/2}\Bigr)/2$. (Refer to Figure 1.)
\begin{figure}
\centering
\includegraphics[width=8cm]{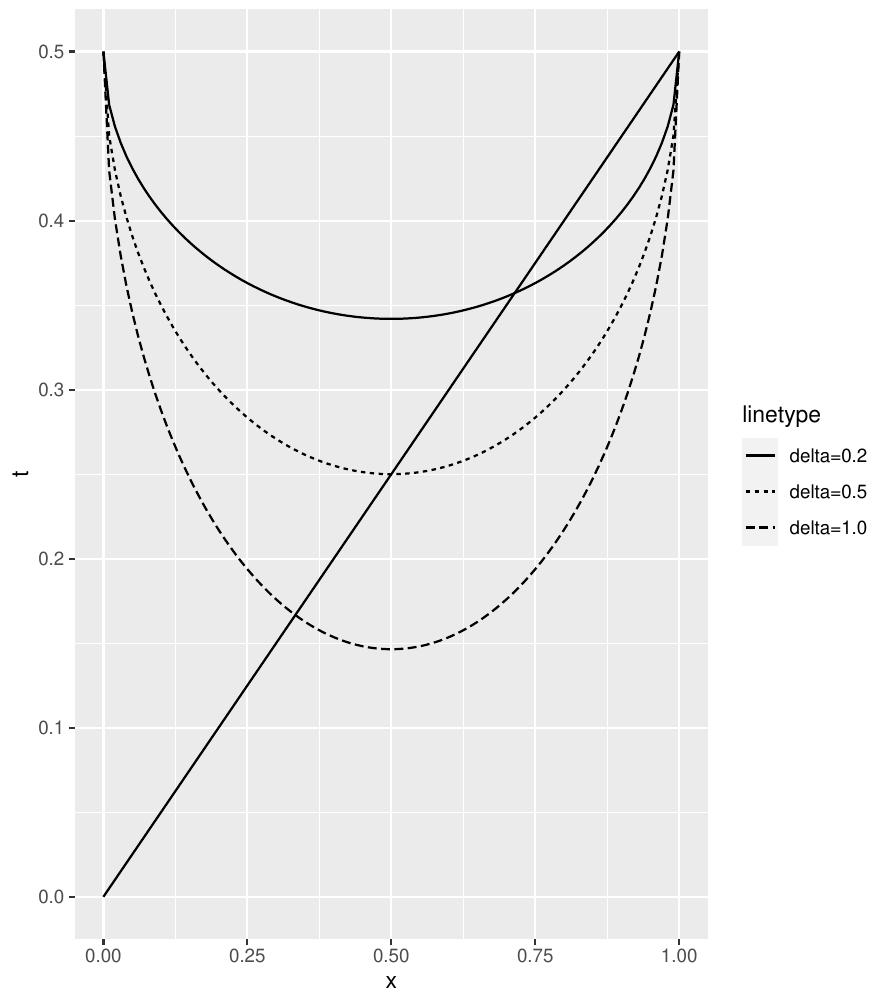}
\caption{Graph of $t(x) = \Bigl(1-\Bigl(2\delta x(1-x)\Bigr)^{1/2}\Bigr)/2$ with $\delta=0.2,0.5,1.0$}
\end{figure}
%when $\delta > 1/2$, $\inf\{t \ | \ (x,t) \in A(\delta) \}$ is approximately given by the solution of the simultaneous equations
%\[
%\begin{cases}
%t =  \Bigl(1-\Bigl(2\delta x(1-x)\Bigr)^{1/2}\Bigr)/2, \\
%t = x/2,
%\end{cases}
%\]
%which equals by $1/(4\delta+2)$. 
Note that 
\[
(1-\delta/4)^2/2 > \Bigl(1-\sqrt{\delta/2}\Bigr)/2.
\]
Consequently, $\alpha(\delta)$ is given approximately by $\Bigl(1-\sqrt{\delta/2}\Bigr)/2$.
The solution \eqref{eq_del_al} with this approximated $\alpha(\delta)$ is equal to:
\begin{equation}
\label{simple_threshold}
\delta^* = 8\epsilon^2.
\end{equation}
The threshold $\delta^*$  is  $1/50(1/1250)$ when $\epsilon=0.05(0.01)$. The same approximation holds for the Kullback-Leibler divergence (refer to Corollary 1 in Sheena \cite{Sheena_5}). 
%
%
%
%  Discretization Methods
%
%
%
\section{Discretization Methods}
\label{dis_methods}
In this section, we consider two contrasting discretization methods: the fixed-region method and the moving-region method. Consider 
\[
\bm{X}=(X_1,\ldots,X_k)
\]
as a $k$-dimensional random vector. Its support is given by
\[
\mathcal{X}=[a_1 \ b_1]\times [a_k\ b_k],
\]
where it is possible for $a_i=-\infty$ and $b_i=\infty$.

First, these methods are discussed for the case where $\bm{X}$ is one-dimensional (i.e., $k=1$). In this case, the terms ``fixed interval method'' and ``moving interval method'' are used for both methods. Next, we generalize the discretization to the general case when $k\geq 2$.
%
%
% Discretization for scalar X
%
%
\subsection{Discretization of Scalar $X$}
\label{Discretization of Scalar}
For the case $k=1$, we use notation $X$ instead of $\bm{X}$. The fixed interval method is an ordinary discretization method, and the intervals are fixed independently from the sample values. In other words, the intervals $I_i,\ i=1,\ldots, p$ such that
\[
\mathcal{X}=\bigcup_{i=1}^{p} I_i,\qquad I_i \cap I_j = \phi\ (i\ne j).
\]
were administered before taking the sample.
%The parametric distribution to be estimated is a multinomial distribution with the parameter (probability for each bin) given by 
%\begin{equation}
%\label{para_for_fixed}
%m = (m_0, \ldots,m_p),\qquad m_i = P( X \in I_i),
%\end{equation}
%where $I_i$ is each fixed interval. The MLE of the parameter $\hat{m}=(\hat{m}_1,\ldots,\hat{m}_p)$ is given by the sample ratio for each interval. This method is straightforward and is essentially an estimation using a histogram.

The second method is the moving interval method.
% and essentially an estimation by the empirical distribution function. 
%For simplicity, we assume that the support of $X$ equals the entire real line. 
We fix points $\lambda_i (1\leq i \leq p)$ in the interval $(0, 1)$.
\begin{equation}
\label{def_per}
\lambda_0(\triangleq 0)< \lambda_1 < \lambda_2 < \cdots < \lambda_p <\lambda_{p+1}(\triangleq 1).
\end{equation}
and 
\begin{equation}
\label{def_per_mother}
\xi_i \triangleq F^{-1}(\lambda_i),\ 1\leq i \leq p, \quad \xi_0\equiv a_1,\quad \xi_{p+1}\equiv b_1,
\end{equation}
where $F^{-1}$ is the inverse function of the cumulative distribution function $F$ of the mother distribution. Namely,  $\xi_i$ is the percentile of the mother distribution corresponding to $\lambda_i$.

Let $\hat{\xi}_i$ denote the estimator of $\xi_i$ for $i=1,\ldots p$ ($\hat{\xi}_0\equiv a_1$ and $\hat{\xi}_{p+1}\equiv b_1$). Subsequently, the moving intervals
\[
I_i \triangleq (\hat{\xi}_i,\ \hat{\xi}_{i+1}) \quad 0\leq i \leq p,
\]
are defined. 
%The multinomial distribution over these intervals (bins) is the object to be estimated, and its unknown parameters are given by
%\[
%\hat{m}\triangleq(\hat{m}_0,\ldots,\hat{m}_p),\qquad \hat{m}_i \equiv P[X \in I_i] \quad 0\leq i \leq %p,
%\]
%They are  estimated as 
%\begin{equation}
%\label{def_m_moving}
%m\triangleq(m_0, \ldots, m_p),\qquad m_i\triangleq \lambda_{i+1}-\lambda_{i} \quad 0\leq i \leq p.
%\end{equation}
%Though it might be confusing to use reversed notations ($\hat{m}$ for the parameter, $m$ for the estimator), the hat ( \verb|^| ) is used here to indicate its dependency on the sample.

Let $X_\tau,\ \tau=1,\ldots,n$ be an i.i.d. sample of size $n$ following the distribution of $X$. If we simply estimate percentile $\xi_i$ using the order statistic, 
\[
X_{(1)} \leq X_{(2)} \leq \cdots \leq X_{(n)}, 
\]
Then, $\hat{\xi} _i $ is given by: 
\begin{equation}
\hat{\xi}_i\triangleq X_{(\tilde{n}_i)}\quad 1\leq i \leq p,
\end{equation}
where $\tilde{n}_i = \lfloor n\lambda_i \rfloor$ ($\lfloor \cdot \rfloor$ denotes the floor function). If we choose 
\begin{equation}
\label{equal_mass}
\lambda_i=i/(p+1),\ i=1,\ldots,p,
\end{equation}
each interval contains (almost) the same number of individuals (``equal mass partitions '').

Currently, there is no standard or systematic method for deciding the bins for fixed interval methods. Contrariwise, the moving interval method has a natural choice of $\lambda_i$ \eqref{equal_mass}. In practical situations, a mixture of both methods is often applied. For instance, when we draw a histogram, we usually determine the bin number or endpoints of the intervals after observing the sample values.

%For the case of scalar $\bm{X}$, Sheena(2018) compares both methods. Theorem 3 shows that the moving interval method with the equal mass condition \eqref{equal_mass} is asymptotically superior to the fixed interval method when the divergence is symmetrized $\alpha$-divergence. 
%If you consider the first order ($n^{-1}$) asymptotic results, it suffices to consider the case wherein $n\lambda_i$'s are all integers. (Refer to Sheena[2018] for $n^{-2}$-order considerations when $n\lambda_i$ is not necessarily an integer). 
%
%
%
%  Discretization for $k$-dimensional random vector
%
%
%
\subsection{Discretization of Random Vector $\bm{X}$}
\label{discre_vector_X}
We generalize the discretization for the scalar ($k=1$) to that for the general $k$-dimensional random vector, $\bm{X}$. 
For the fixed region, which is independent of the sample, the generalization can be simply made by substituting the partition by intervals with that of mutually exclusive regions $I_i,\ i=0,\ldots, p'$ of any shape. 
\begin{equation}
\label{partition_fix}
\mathcal{X}=\bigcup_{i=0}^{p'} I_i.
\end{equation}

Next, we consider the moving region method for the case $k\geq 2$, namely, discretization  based on the sample. Let an i.i.d. sample of size $n$ from the distribution of $\bm{X}$ be denoted by:
\[
\bm{X}_\tau = (X_{1\tau},\ldots,X_{k\tau}),\qquad \tau=1,\ldots,n.
\]
Although there are many possible partitions of $\mathcal{X}$ based on the sample, we consider a simple recursive method using the order statistics along each coordinate in this study. 
%Though we focus on the partition that attains the (almost) equal mass, the more general partition can be made similarly.

%First, we define the following notations;
%For $i=1,\ldots,k,\ j=1,\ldots,p$, let 
%\begin{align}
%n_{i,j} \triangleq \lfloor(n j )/(p+1)^i \rfloor, \\
%r_{i,j} \triangleq (n j )/(p+1)^i - n_{i,j}, \\
%n_{s} \triangleq \lfloor n/(p+1)^{s-1} \rfloor \\
%\tau_{s} \triangleq n/(p+1)^{s-1}-n_{s}
%\end{align}
%where $\lfloor \cdot \rfloor$ is the floor function. 
The partition was performed according to the following steps: \bigskip
\\
-- First Step --\\
We partition the entire sample into $p+1$ regions using the following order statistics: 
\[
X_1(1) \leq \cdots \leq X_1(n),
\]
derived from $X_{11},\ldots,X_{1n}$. We define the integer $\tilde{n}_{j}$ 
and interval $I_{j}$ as
\begin{equation}
\label{def_I_i}
\begin{split}
&\tilde{n}_{j} \triangleq \lfloor(n j )/(p+1) \rfloor, \qquad  j=0,\ldots,p+1,\\
&I_{j} \triangleq  (X_1(\tilde{n}_{j} ),\ X_1(\tilde{n}_{j+1})], \quad X_1(\tilde{n}_{0}) \triangleq a_1,\ X_1(\tilde{n}_{p+1})\triangleq b_1, \qquad j=0,\ldots,p. 
\end{split}
\end{equation}
Each region $A_{j} (j=0,\ldots,p)$ is defined as follows:
\[
A_{j}\triangleq\bigl\{  \bm{X} \in \mathcal{X} \,\big| \,X_{1} \in I_{j} \bigr\}
\]
Let $n(A_j)$ denote the number of individuals belonging to $A_j$, which is equal to $\tilde{n}_{j+1}-\tilde{n}_{j}$.
\\
\\
-- Second Step -- \\
We partition each region $A_{j} (j=0,\ldots,p)$ into $p_j+1$ regions using the following order statistics: 
\[
X_{j2}(1) \leq \cdots \leq X_{j2}(n(A_j))
\]
from the individuals of the sample that belong to $A_j$ in ascending order in the second coordinate. Consider
\[
\tilde{n}_{j,k} \triangleq \lfloor n(A_j) k /(p_j+1) \rfloor,\quad k=0,\ldots,p_j+1
\]
and for  $k=0,\ldots,p_j$, 
\[
I_{jk} \triangleq  (X_{j2}(\tilde{n}_{j,k} ),\ X_{j2}(\tilde{n}_{j,k+1})], 
\]
where
\[
X_{j2}(\tilde{n}_{j,0}) \triangleq a_2,\ X_{j2}(\tilde{n}_{j,p_j+1})\triangleq b_2.
\]
We define the new region $A_{jk} (k=0,\ldots,p_j)$ as
\[
A_{jk}\triangleq\bigl\{ \bm{X} \in A_j \big|  X_{2} \in I_{jk} \bigr\}.
\]
The number of individuals that belong to this region is denoted by $n(A_{jk})$.

We proceed similarly by partitioning each region made in the previous step by using order statistics with respect to another variate. In general, the $i$th step ($i=2,\ldots,k$) is given as follows:
\\
\\
-- $i$th Step --\\
We partition region $A_{j_1 j_2 \ldots j_{i-1}}$ into $p_{j_1 j_2 \ldots j_{i-1}}+1$ regions using order statistics.
\[
X_{j_1 j_2 \ldots j_{i-1} i}(1) \leq \cdots \leq X_{j_1 j_2 \ldots j_{i-1} i}(n(A_{j_1 j_2 \ldots j_{i-1}}))
\]
made from the individuals of the sample belonging to $A_{j_1 j_2 \ldots j_{i-1}}$ in ascending order in the $i$th coordinate. Let
\[
\tilde{n}_{j_1,\ldots,j_{i-1},j_i} \triangleq \lfloor n(A_{j_1 j_2 \ldots j_{i-1}}) j_i/(p_{j_1 j_2 \ldots j_{i-1}}+1) \rfloor,\quad j_i=0,\ldots,p_{j_1 j_2 \ldots j_{i-1}}+1
\]
and 
\[
I_{j_1 j_2 \ldots j_{i-1} j_i}  \triangleq  (X_{j_1 j_2 \ldots j_{i-1} i}(\tilde{n}_{j_1,\ldots,j_{i-1},j_i}),\ X_{j_1 j_2 \ldots j_{i-1} i}(\tilde{n}_{j_1,\ldots,j_{i-1},j_i+1})], \quad j_i=0,\ldots,p_{j_1 j_2 \ldots j_{i-1}}
\]
where
\[
X_{j_1 j_2 \ldots j_{i-1} i}(\tilde{n}_{j_1,\ldots,j_{i-1},0}) \triangleq a_i, \quad X_{j_1 j_2 \ldots j_{i-1} i}(\tilde{n}_{j_1,\ldots,j_{i-1},p_{j_1 j_2 \ldots j_{i-1}}+1})\triangleq b_i.
\]
The new region $A_{j_1 j_2 \ldots j_{i-1} j_i} (j_i=0,\ldots,p_{j_1 j_2 \ldots j_{i-1}})$ is defined as:
\[
A_{j_1 j_2 \ldots j_{i-1}  j_i}\triangleq\bigl\{ \bm{X} \in  A_{j_1 j_2 \ldots j_{i-1}} \big| X_{i} \in I_{j_1 j_2 \ldots j_{i-1} j_i} \bigr\},
\]
where size is denoted by $n(A_{j_1 j_2 \ldots j_{i-1}  j_i})$. 

When we finish the $l$th step ($l \leq k$), the partition is completed (we call $l$ ``the partition depth'');
\begin{equation}
\label{partition_moving}
\mathcal{X} = \bigcup_{\bj(l) } A_{\bj(l)},\qquad \bj(l) \triangleq (j_1,\ldots,j_l),
\end{equation}
where each $A_{\bj(l)}$ is mutually exclusive. 
Note that this coordinate-wise recursive partition could include a broad class partition if we use a coordinate transformation (e.g., a linear transformation, polar coordinate system, etc.).

For readability, we adopted the following notations:
\[
\bj(1) \triangleq (j_1),\quad \bj(2) \triangleq (j_1, j_2),\quad \cdots \quad\bj(l) \triangleq (j_1,\ldots,j_l),
\]
and for $ i=1,\ldots,l, \ s=0,\ldots,p_{j_1 j_2 \ldots j_{i-1}}+1,$
\begin{align*}
&A_{\bj(i)} \triangleq A_{j_1\ldots j_i},\quad I_{\bj(i)} \triangleq I_{j_1 \ldots j_i},\quad p_{\bj(i)} \triangleq p_{j_1\ldots j_{i}},\\
& X_{\bj(i-1)i} (\tilde{n}_{\bj(i-1)s}) = X_{j_1 j_2 \ldots j_{i-1} i}(\tilde{n}_{j_1,\ldots,j_{i-1},s}).
\end{align*}
%
%
%
%  The efficiency of MLE for one sample problem
%
%
%
\section{One Sample Problem}
\label{One Sample Problem}
In this section, we consider the ``one sample problem,''  which deals with the closeness between the true distribution of $\bm{X}$ and the predicted distribution based on i.i.d. sample of size $n$ from the true distribution. The result of this problem will be used for the main issue of this paper, the ``two-sample problem,'' in the next section.

For the one-sample problem, the fixed region method and the moving region method are formulated as follows. For the fixed region method, suppose that the partition of $\mathcal{X}$ is given by \eqref{partition_fix}. Then, we have two multinomial distributions. The discretized true distribution of $\bm{X}$ is given by the parameter
\begin{equation}
\label{true_m_fixed}
m=(m_0,\ldots,m_{p'}),\qquad m_i = P(\bm{X} \in I_i),\quad i=0,\ldots,p',
\end{equation}
whereas the corresponding maximum likelihood estimator (MLE) 
\begin{equation}
\label{est_m_fixed}
\hat{m}=(\hat{m}_0,\ldots,\hat{m}_{p'}),
\end{equation} 
is given by the sample ratio of each region $I_i$. 

Suppose that the partition of $\mathcal{X}$ based on the i.i.d. sample is given by \eqref{partition_moving}. For the moving region method, the discretized true distribution of $\bm{X}$ is given by the parameter
\begin{equation}
\label{true_m_moving}
\hat{m} \triangleq \{\hat{m}_{\bj(l)}\}_{\bj(l)},\qquad \hat{m}_{\bj(l)}, \triangleq P(\bm{X} \in A_{\bj(l)}),
\end{equation}
whereas its estimator is given by
\begin{equation}
\label{est_m_moving}
m \triangleq \{m_{\bj(l)}\}_{\bj(l)},\qquad m_{\bj(l)} = 1/(p'_{\bj(l-1)} +1),
\end{equation}
where $p'_{\bj(l-1)} \triangleq (p+1)(p_{\bj(1)}+1)\cdots (p_{\bj(l-1)}+1)-1$ and the lower index $\bj(l)$ outside the braces implies that $\bj(l)$ runs through all possible cases.

We measure the discrepancy between the two multinomial distributions represented by the parameters $m$ and $\hat{m}$ through $f$-divergence. For the fixed region method,
\begin{equation}
\label{f-div_fix}
D_f[m : \hat{m}] \triangleq
\sum_{i=0}^{p'} m_i \:f\biggl(\frac{\hat{m}_i}{m_i}\biggr),
\end{equation}
And for the moving-region method,
\begin{equation}
\label{f-div_move}
D_f[m : \hat{m}] \triangleq
\sum_{\bj(l)} m_{\bj(l)} \:f\biggl(\frac{\hat{m}_{\bj(l)}}{m_{\bj(l)}}\biggr).
\end{equation}
For \eqref{f-div_move}, if one might think it is natural to consider $D_f[\hat{m} : m]$ in the sense that the true parameter should come first, it is satisfied using the dual function $f^*$ (refer to \eqref{dual_f}). Hence, we proceed with Eq. \eqref{f-div_move}.

The risks for the fixed region method and moving region method are denoted by $ED_I$ and $ED_P$, that is,
\begin{align*}
&ED_I \triangleq E\bigl[D_f[m : \hat{m}]\bigr],\qquad \text{$D_f[m : \hat{m}]$ is given by \eqref{f-div_fix}}\\
&ED_P \triangleq E\bigl[D_f[m : \hat{m}]\bigr],\qquad \text{$D_f[m : \hat{m}]$ is given by \eqref{f-div_move}}
\end{align*}

We briefly summarize the results of Sheena \cite{Sheena_4}, who studied the asymptotic efficiency of the fixed and moving interval methods in the scalar case. First, the moving interval method is asymptotically superior in the $n^{-2}$ order to any fixed interval method with respect to the risk of the symmetrized $\alpha$-divergence  when the $\lambda$ values are chosen as in \eqref{equal_mass} (see Theorem 3 of \cite{Sheena_4}). This superiority denotes that the fixed interval method requires a larger sample size to be on par with the moving interval method in terms of estimation efficiency. Second, the estimation efficiency of the fixed interval method relies heavily on the true distribution (Theorem 1 of \cite{Sheena_4}), whereas that of the moving interval method is independent of the true distribution (Theorem 2 of \cite{Sheena_4}). 

Now, we generalize the results of \cite{Sheena_4} for the scalar case into the vector case. Let $N$ denote the sample size.
%
%
%  Theorem 2: Asymptotic expansion of ED_I
%
%
\begin{theo}
\label{ED_I_one_sample}
Suppose that $f^{(5)}$ exists on $(0, \infty)$ and is bounded on $[\epsilon, \infty)$ for any $\epsilon (> 0)$. 
\begin{equation}
\label{ED_I_expan}
ED_I=\frac{p'}{2n}+\frac{1}{24n^2}\Bigl[ 4f^{(3)}(1)\Bigl(-3{p'}-1+M \Bigr)+3f^{(4)}(1)\Bigl(-2{p'}-1+M\Bigr)\Bigr]+o(n^{-2}),
\end{equation}
where
$$
M\triangleq \sum_{i=0}^{p'} m_i^{-1}.
$$
\end{theo}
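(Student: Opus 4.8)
The plan is to expand the $f$-divergence $D_f[m:\hat m]=\sum_{i=0}^{p'} m_i f(\hat m_i/m_i)$ in a Taylor series around $\hat m_i=m_i$ and take expectations term by term, retaining everything of order $n^{-2}$ and absorbing the rest into $o(n^{-2})$. Since $f(1)=f'(1)=0$ and $f''(1)=1$, writing $\hat m_i = m_i(1+U_i)$ with $U_i \triangleq (\hat m_i-m_i)/m_i$ gives
\[
D_f[m:\hat m] = \sum_{i=0}^{p'} m_i\Bigl[\tfrac12 U_i^2 + \tfrac{f^{(3)}(1)}{6}U_i^3 + \tfrac{f^{(4)}(1)}{24}U_i^4 + R_i\Bigr],
\]
where $R_i$ is a fifth-order remainder controlled by the boundedness hypothesis on $f^{(5)}$. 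The first task is therefore to compute the moments $E[U_i^2]$, $E[U_i^3]$, $E[U_i^4]$ up to the needed order. Here $n\hat m_i$ is $\mathrm{Binomial}(n,m_i)$ (the counts are jointly multinomial, but only the marginals enter because the expansion is diagonal in $i$), so these are standard: $E[U_i^2]=(1-m_i)/(nm_i)$ exactly, $E[U_i^3]=(1-m_i)(1-2m_i)/(n^2 m_i^2)+O(n^{-3})$ is in fact exact at order $n^{-2}$ from the binomial third central moment, and $E[U_i^4]=3(1-m_i)^2/(n^2 m_i^2)+O(n^{-3})$.

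Next I would assemble the contributions. The quadratic term gives $\sum_i m_i\cdot\tfrac12\cdot\tfrac{1-m_i}{nm_i} = \tfrac{1}{2n}\sum_i(1-m_i) = \tfrac{p'}{2n}$, which is the leading term. For the $n^{-2}$ bracket: the cubic term contributes $\tfrac{f^{(3)}(1)}{6}\sum_i m_i\cdot\tfrac{(1-m_i)(1-2m_i)}{n^2m_i^2} = \tfrac{f^{(3)}(1)}{6n^2}\sum_i \tfrac{(1-m_i)(1-2m_i)}{m_i}$, and the quartic term contributes $\tfrac{f^{(4)}(1)}{24}\sum_i m_i\cdot\tfrac{3(1-m_i)^2}{n^2 m_i^2} = \tfrac{f^{(4)}(1)}{8n^2}\sum_i\tfrac{(1-m_i)^2}{m_i}$. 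Expanding $\sum_i (1-m_i)(1-2m_i)/m_i = \sum_i(m_i^{-1}-3+2m_i) = M - 3(p'+1) + 2 = M - 3p' - 1$ and $\sum_i(1-m_i)^2/m_i = \sum_i(m_i^{-1}-2+m_i) = M - 2(p'+1)+1 = M - 2p' - 1$, then clearing the common factor $1/24$, gives exactly the stated bracket $4f^{(3)}(1)(-3p'-1+M) + 3f^{(4)}(1)(-2p'-1+M)$.

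The remaining step is to justify that the remainder is genuinely $o(n^{-2})$: I would split $R_i$ according to whether $|U_i|$ is small (say $\le 1/2$), where the $f^{(5)}$ bound on $[\epsilon,\infty)$ with $\epsilon = m_i/2$ controls the fifth-order term and $E|U_i|^5 = O(n^{-5/2})$ suffices, and the complementary large-deviation event $\{|U_i|>1/2\}$, whose probability is exponentially small in $n$ by a Chernoff/Bernstein bound for the binomial — this tail bound, combined with the fact that $f$ evaluated at the relevant arguments grows at most polynomially on $(0,\infty)$ (or is handled via the $f(0)$ value and monotonicity of the convex $f$), makes the large-deviation contribution negligible to all polynomial orders. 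The main obstacle is precisely this tail control: on $\{|U_i| > 1/2\}$ the argument $\hat m_i/m_i$ can be as small as $0$ or as large as $1/m_i$, so one cannot simply invoke a Taylor bound and must instead bound $f$ on all of $[0,\infty)$ (using convexity and the explicit values $f(0)$, $f(1)=0$) against the exponentially small probability of that event. Once that is in place, interchanging expectation and the finite sum is immediate and the expansion follows. (I would remark that this is the vector-case analogue of Theorem 1 of \cite{Sheena_4}, and indeed the proof is formally identical to the scalar case once one notes the computation never uses the dimension of $\bm X$, only that the fixed partition yields a multinomial with $p'+1$ cells.)
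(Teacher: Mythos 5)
Your proposal is correct and is, in substance, exactly the argument the paper delegates to Theorem 1 of \cite{Sheena_4}: a fourth-order Taylor expansion of $f$ at $1$ combined with the exact binomial central moments of the cell frequencies $\hat m_i$, with the fifth-order remainder controlled on the event $\{|U_i|\le 1/2\}$ and the exponentially small complementary tail handled separately. Your algebra $\sum_i(1-m_i)(1-2m_i)/m_i = M-3p'-1$ and $\sum_i(1-m_i)^2/m_i = M-2p'-1$ reproduces the stated bracket exactly, so you have in effect written out the computation that the paper's one-line proof omits.
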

\begin{proof}
This is obvious from the proof of Theorem 1 in \cite{Sheena_4}.
\end{proof}
%
%
%
%  Theorem 3: Asymptotic expansion of ED_P
%
%
\begin{theo}
\label{asymp_risk_moving}
Suppose that $f^{(3)}$ exists on $(0, \infty)$ and is bounded on $[\epsilon, \infty)$ for any $\epsilon (> 0)$. Then,
\begin{equation}
\label{ED_P_asymp}
ED_P = \frac{p'}{2n} + o(n^{-1}),
\end{equation}
where $p'\triangleq \sum_{\bj(l-1)}(p+1)(p_{\bj(1)}+1)\cdots (p_{\bj(l-1)}+1)-1$, namely 
 the total number of regions $A_{\bj(l)}$ in \eqref{partition_moving} minus one (i.e., the number of free parameters of the multinomial distributions).
\end{theo}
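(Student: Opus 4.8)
The plan is to Taylor-expand the $f$-divergence about the common point of the two multinomials, reduce the risk to a sum of marginal second moments of the deviations $u_{\bj(l)}\triangleq \hat m_{\bj(l)}-m_{\bj(l)}$, and show that each such moment equals that of an ordinary multinomial cell, $m_{\bj(l)}(1-m_{\bj(l)})/n+o(n^{-1})$. Since $f(1)=f'(1)=0$ and $f''(1)=1$, Taylor's theorem with the Lagrange remainder (legitimate since $f^{(3)}$ exists on $(0,\infty)$ and $\hat m_{\bj(l)}/m_{\bj(l)}>0$ a.s.) gives
\[
D_f[m:\hat m]=\frac12\sum_{\bj(l)}\frac{u_{\bj(l)}^2}{m_{\bj(l)}}+\frac16\sum_{\bj(l)}\frac{f^{(3)}(\xi_{\bj(l)})}{m_{\bj(l)}^2}\,u_{\bj(l)}^3
\]
for some $\xi_{\bj(l)}$ between $1$ and $\hat m_{\bj(l)}/m_{\bj(l)}$. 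Taking expectations, the theorem follows from (i) $E[u_{\bj(l)}^2]=m_{\bj(l)}(1-m_{\bj(l)})/n+o(n^{-1})$ for every region --- whence $\tfrac12\sum_{\bj(l)}m_{\bj(l)}^{-1}E[u_{\bj(l)}^2]=\tfrac1{2n}\sum_{\bj(l)}(1-m_{\bj(l)})+o(n^{-1})$, which equals $p'/(2n)+o(n^{-1})$ because there are $p'+1$ regions and the design weights $m_{\bj(l)}$ sum to $1$ --- together with (ii) that the cubic remainder is $o(n^{-1})$.

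For (i), the starting point is that the region counts produced by \eqref{def_I_i}--\eqref{partition_moving} are deterministic up to the floor functions: unwinding the recursion one coordinate at a time gives $n(A_{\bj(l)})=n\,m_{\bj(l)}+O(1)$, so $F_n(A_{\bj(l)})=m_{\bj(l)}+O(n^{-1})$ is nonrandom and $u_{\bj(l)}=F(A_{\bj(l)})-m_{\bj(l)}=-(F_n-F)(A_{\bj(l)})+O(n^{-1})$, reducing everything to the empirical process evaluated at the data-dependent set $A_{\bj(l)}$. Each $A_{\bj(l)}$ is an intersection of at most $2k$ coordinate slabs with order-statistic endpoints, so the regions range over a Vapnik--Chervonenkis class of fixed complexity; the maximal inequalities for such a class give $E|(F_n-F)(A_{\bj(l)})|^r=O(n^{-r/2})$ and, since $|u_{\bj(l)}|\le1$, uniform integrability of $\{n\,u_{\bj(l)}^2\}_n$. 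One then couples $A_{\bj(l)}$ with the ``oracle'' region $A^*_{\bj(l)}$ built from the true conditional quantiles, for which $F(A^*_{\bj(l)})=m_{\bj(l)}$ exactly: under the mild regularity that the one-dimensional conditional laws entering the construction are continuous (so their quantiles are well defined and consistently estimated by the order statistics), $F(A_{\bj(l)}\,\triangle\,A^*_{\bj(l)})\to0$, and asymptotic equicontinuity of the empirical process over the VC class yields $(F_n-F)(A_{\bj(l)})=(F_n-F)(A^*_{\bj(l)})+o_p(n^{-1/2})$. Since $\sqrt n\,(F_n-F)(A^*_{\bj(l)})\Rightarrow N\!\bigl(0,\,m_{\bj(l)}(1-m_{\bj(l)})\bigr)$ by the ordinary central limit theorem, uniform integrability upgrades this to $E[n\,u_{\bj(l)}^2]\to m_{\bj(l)}(1-m_{\bj(l)})$, which is (i). An alternative closer to \cite{Sheena_4} proves (i) by induction on the partition depth $l$: conditionally on the first $l-1$ steps the $l$th coordinates of the sub-sample inside $A_{\bj(l-1)}$ undergo a one-dimensional equal-mass moving partition of sample size $n(A_{\bj(l-1)})=n\,m_{\bj(l-1)}+O(1)$, to which the scalar result of \cite{Sheena_4} applies (its leading term being the Dirichlet variance of spacings of uniform order statistics, via the probability integral transform); expressing $\hat m_{\bj(l)}$ multiplicatively through its conditional masses and iterating, the between-level cross terms are $O(n^{-3/2})$ and the diagonal terms telescope to $m_{\bj(l)}(1-m_{\bj(l)})/n$.

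For (ii), split the expectation according to whether $|u_{\bj(l)}|\le m_{\bj(l)}/2$. On that event $\xi_{\bj(l)}\in[1/2,3/2]$, so $|f^{(3)}(\xi_{\bj(l)})|$ is bounded by hypothesis and the contribution is $O(E|u_{\bj(l)}|^3)=O(n^{-3/2})$. On the complement $\hat m_{\bj(l)}$ stays outside a fixed neighbourhood of $m_{\bj(l)}>0$, an event whose probability decays faster than any power of $n$ (VC concentration, or the Beta/Dirichlet tail in the conditioning approach), so $E[\,|f^{(3)}(\xi_{\bj(l)})|\;;\;|u_{\bj(l)}|>m_{\bj(l)}/2\,]$ is negligible as long as $f^{(3)}$ grows at most polynomially near the origin --- which holds for every $\alpha$-divergence, and in particular for the Hellinger case $f(x)=2(1-\sqrt x)^2$, $f^{(3)}(x)=-\tfrac32 x^{-5/2}$, used in Section~\ref{criteria}. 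Summing over the finitely many regions finishes the proof.

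I expect step (i) to be the real obstacle: one must show that the recursively and data-adaptively defined region probabilities $\hat m_{\bj(l)}$ fluctuate, at the order $n^{-1}$ that controls the risk, exactly as the cell proportions of a fixed multinomial. The work sits in the empirical-process equicontinuity over the class of partition regions (or, in the inductive route, in disentangling the dependence between successive coordinate splits) together with the uniform integrability needed to turn the distributional limit into convergence of $E[u_{\bj(l)}^2]$; the remaining steps are bookkeeping, apart from the minor nuisance that the stated hypothesis allows $f^{(3)}$ to be unbounded at $0$.
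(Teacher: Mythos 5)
Your proposal is correct, and it arrives at exactly the intermediate identity on which the paper's proof turns, namely $E\bigl[(\hat{m}_{\bj(l)}-m_{\bj(l)})^2\bigr]=m_{\bj(l)}(1-m_{\bj(l)})/n+o(n^{-1})$, whose sum over the $p'+1$ cells gives $p'/(2n)$ --- but your primary route to that identity is genuinely different from the paper's. The paper works conditionally level by level: it factors $\hat{m}_{\bj(l)}$ into a product of conditional cell probabilities, applies the probability integral transform so that each factor becomes a spacing of uniform order statistics, and then uses the exact first and second moments of those order statistics (via \cite{Lehmann} and \cite{Sheena_3}) together with the telescoping identity $\sum_{i=1}^l\{(p'_{\bj(i-1)}+1)-(p'_{\bj(i-2)}+1)\}=p'_{\bj(l-1)}$; the between-level cross terms are negligible because the conditional mean of each normalized order statistic is $O(n^{-1/2})$. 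That is essentially your ``alternative'' inductive route. Your main route instead notes that the empirical content of each cell is deterministic up to floor corrections, rewrites the deviation as the centered empirical measure of the data-dependent rectangle, and swaps that rectangle for its oracle (true-conditional-quantile) counterpart by asymptotic equicontinuity over a VC class --- a Bahadur-representation-type argument that makes it conceptually transparent why the adaptively built cells fluctuate exactly like fixed multinomial cells. What this buys is a shorter, more structural proof; what it costs is the importation of Donsker/VC machinery, an explicit continuity assumption on the conditional laws (which the paper also needs, implicitly, for the probability integral transform), and an inductive consistency argument for the recursively estimated conditional quantiles that still has to be written out in full. Finally, your handling of the cubic remainder is more careful than the paper's: you correctly observe that boundedness of $f^{(3)}$ on $[\epsilon,\infty)$ for each $\epsilon>0$ does not control $f^{(3)}$ near the origin, so one needs either a growth condition there (satisfied by every $\alpha$-divergence) combined with exponential concentration of the bad event, or the paper's less transparent device of bounding $1+R^*_{\bj(l)}$ away from zero.
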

\begin{proof}
Refer to Appendix.
\end{proof}
The $n^{-2}$-order term in $ED_P$ is not as simple as that in $ED_I$ and seems rather difficult to be used in practice. For both methods, $p'$ is equal to the number of free parameters of the corresponding multinomial distribution. Thus, for the $n^{-1}$-order term, there is no difference in the estimation efficiency. The superiority of the moving region method in the $n^{-2}$-order term for a general $k\geq 2$ has not been proven.
%
%
%
%
%  Two sample problem
%
%
%
\section{Two Sample Problem}
\label{Two Sample Problem}
We now return to the two-sample problem and explain the basic approach. To check the closeness of the mother distribution and the model distribution through the discretization method, it is simple and easy to set the common regions (bins) independently of the samples and compare the corresponding two multinomial distributions over these bins. However, considering the superiority of the moving interval method for a one-sample problem when $\bm{X}$ is scalar, it seems better to adopt the moving region method for determining common bins. A question arises as to which sample is to be used ($\xf$ or $\xs$) in \eqref{two_sample} to decide the moving regions. In normal situations, $\xs$ is easily obtained through simulation; hence, $n_2$ could be much larger than $n_1$, and it seems intuitive to construct a common bin based on $\xs$. 

Suppose that we partition $\mathcal{X}$ based on $\xs$ as in Section \ref{discre_vector_X}; then, $A_{\bm{j}(l)}$ are given as in \eqref{partition_moving}.

The discretized mother and model distributions are multinomial distributions over the bins $A_{\bm{j}(l)}$ which are given by the corresponding probabilities
\begin{equation}
\begin{split}
m^{(1)}&\triangleq\{\mf{\bm{j}(l)}\},\quad \mf{\bm{j}(l)} \triangleq P(\bm{X} \in A_{\bm{j}(l)} | \text{the mother distribution}),\\ 
m^{(2)}&\triangleq\{\ms{\bm{j}(l)}\},\quad\ms{\bm{j}(l)} \triangleq P(\bm{X} \in A_{\bm{j}(l)} | \text{the model distribution}).
\end{split}
\end{equation}
The corresponding estimator is given by:
\begin{equation}
\label{def_m_hat}
\begin{aligned}
\mfe{} &\triangleq \{\mfe{\bm{j}(l)}\}, \qquad \mfe{\bm{j}(l)} 
\triangleq \#\{\xf \ |\ \xf \in A_{\bm{j}(l)} \}/n_1, \\
\mse{} &\triangleq \{\mse{\bm{j}(l)}\},\qquad\mse{\bm{j}(l)} \triangleq 1/(p'_{\bm{j}(l-1)}+1),
\end{aligned}
\end{equation}
where $\#$ is the set size function. 

From now on, we focus on the Hellinger distance ($\alpha = 0$) and use it for the comparison of the two distributions for the following reasons. First, it is the only divergence among $\alpha$-divergences that satisfies the property of distance. That is, it satisfies the symmetricity and triangular equation. Second, it avoids the zero-estimation problem. In practice, we often encounter a situation wherein the estimator $\mfe{\bm{j}(l)}$ becomes zero and $\alpha$-divergence diverges. If $-1 < \alpha < 1$, then the $\alpha$- divergence does not diverge. 

Now, the evaluation of the closeness between the mother distribution and the model distribution is given by (we use notation $D[\mf{} : \ms{}]$ instead of $\overset{0}{D}[\mf{} : \ms{}]) $
\begin{equation}
\label{target_estimated}
ED[\mf{} : \ms{}] \triangleq E\Bigl[D[\mf{} : \ms{}]\Bigr] = 2E\Bigl[\sum_{\bm{j}(l)} \Bigl(\sqrt{m^{(1)}_{\bm{j}(l)}}-\sqrt{m^{(2)}_{\bm{j}(l)}}\Bigr)^2\Bigr]
\end{equation}
We estimate $ED[\mf{} : \ms{}]$ as
\begin{equation}
\label{estimator}
D[\mfe{} : \mse{}] = 2\sum_{\bm{j}(l)}  \Bigl(\sqrt{\mfe{\bm{j}(l)}}-\sqrt{\mse{\bm{j}(l)}}\Bigr)^2.
\end{equation}
%where $m = (m_0, \ldots, m_p),\ m_i = 1/(p+1),\ i=0,\ldots,p$.
%In the following, first we evaluate the asymptotic bias of this estimator (Section \ref{bias_result}). Next, in Section \ref{f-div&error}, we investigate the relationship between $f$-divergence and the error rate. Finally, in Section \ref{criteria}, we propose a simple criterion for closeness between the mother and model distributions.
%
%
%  Theorem for Bias of the Two-Sample Estimator 
%
%
%
The following theorem provides the upper limit of the asymptotic bias of the estimator \eqref{estimator}.
\begin{theo}
\label{bias_two_sample}
The following inequality holds:
\begin{equation}
\label{ineq_bias}
ED[\mf{} : \ms{}] \leq ED[\mfe{} : \mse{}]  + \frac{\sqrt{8p'}}{\sqrt{n_2}} + o(n_1^{-1}) + o(n_2^{-1/2}).
\end{equation}
\end{theo}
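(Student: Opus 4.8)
The plan is to exploit that the Hellinger distance is a bona fide metric and to split the gap between $ED[\mf{}:\ms{}]$ and the estimator $ED[\mfe{}:\mse{}]$ into its two independent sources: the sampling fluctuation of $\mfe{}$ around $\mf{}$, governed by $n_1$, and the discrepancy between the design probabilities $\mse{}$ and the genuine model probabilities $\ms{}$ over the bins, governed by $n_2$. Writing $H(p,q)\triangleq\bigl(\sum_{\bm{j}(l)}(\sqrt{p_{\bm{j}(l)}}-\sqrt{q_{\bm{j}(l)}})^{2}\bigr)^{1/2}$, so that $D[p:q]=2H(p,q)^{2}$ and $H$ satisfies the triangle inequality, I would insert $\mse{}$ as an intermediate point and square:
\[
D[\mf{}:\ms{}]\ \le\ D[\mf{}:\mse{}]+2\sqrt{D[\mf{}:\mse{}]}\,\sqrt{D[\mse{}:\ms{}]}+D[\mse{}:\ms{}].
\]
Taking expectations, it then remains to control the three terms on the right.

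For the first term I would note that $m\mapsto D[m:\mse{}]=2\sum_{\bm{j}(l)}(\sqrt{m_{\bm{j}(l)}}-\sqrt{\mse{\bm{j}(l)}})^{2}$ is convex, since each summand has second derivative $\sqrt{\mse{\bm{j}(l)}}\,m_{\bm{j}(l)}^{-3/2}>0$ in $m_{\bm{j}(l)}$ and $\mse{}$ is deterministic. Because the partition is built from $\xs$ alone, it is independent of $\xf$, hence $E[\mfe{\bm{j}(l)}\mid\xs]=\mf{\bm{j}(l)}$; Jensen's inequality then yields $E[D[\mf{}:\mse{}]]\le E[D[\mfe{}:\mse{}]]=ED[\mfe{}:\mse{}]$. (A Taylor expansion of $E[D[\mfe{}:\mse{}]\mid\xs]$ about $\mfe{}=\mf{}$ gives the more precise $E[D[\mf{}:\mse{}]]=ED[\mfe{}:\mse{}]-c_{n_1}/n_1+o(n_1^{-1})$ with $c_{n_1}\ge0$, which is the origin of the $o(n_1^{-1})$ slack allowed in \eqref{ineq_bias}; the crude Jensen bound already suffices.)

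For the last term, I would observe that partitioning $\mathcal{X}$ from $\xs$ and then comparing $\ms{}$ with the design probabilities $\mse{}$ over the resulting bins is precisely the one-sample moving-region problem of Section \ref{One Sample Problem} with the model distribution in the role of the true distribution and sample size $N=n_2$. Since the Hellinger $f(x)=2(1-\sqrt x)^{2}$ has $f^{(3)}$ bounded on every $[\epsilon,\infty)$, Theorem \ref{asymp_risk_moving} applies and gives $E[D[\mse{}:\ms{}]]=p'/(2n_2)+o(n_2^{-1})$, which is absorbed into $o(n_2^{-1/2})$. For the cross term I would use the universal bound $D[p:q]=4\bigl(1-\sum_{\bm{j}(l)}\sqrt{p_{\bm{j}(l)}q_{\bm{j}(l)}}\bigr)\le4$, hence $\sqrt{D[\mf{}:\mse{}]}\le2$ pointwise, together with concavity of $\sqrt{\cdot}$:
\[
2E\Bigl[\sqrt{D[\mf{}:\mse{}]}\,\sqrt{D[\mse{}:\ms{}]}\Bigr]\le 4\sqrt{E[D[\mse{}:\ms{}]]}=4\sqrt{\tfrac{p'}{2n_2}+o(n_2^{-1})}=\frac{\sqrt{8p'}}{\sqrt{n_2}}+o(n_2^{-1/2}).
\]
Summing the three bounds yields \eqref{ineq_bias}.

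The hard part is getting the constant in the $n_2^{-1/2}$ term right. A naive triangle-inequality estimate that keeps $\mse{}$ and $\ms{}$ separate would only deliver an error of the crude size $O(1)\cdot n_2^{-1/2}$; the improvement comes from recognizing that the design-versus-truth gap is of order $n_2^{-1}$ in squared Hellinger distance (Theorem \ref{asymp_risk_moving}) and therefore enters only through its square root, while the companion factor $\sqrt{D[\mf{}:\mse{}]}$ is controlled by the absolute constant $2$. The remaining point --- that passing from $\mfe{}$ to $\mf{}$ costs nothing in the direction we need --- is exactly what convexity of $D[\,\cdot:\mse{}]$ supplies, so no fresh asymptotic analysis in $n_1$ beyond the estimator's (nonnegative) bias is required.
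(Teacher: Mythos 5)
Your proof is correct, and it reaches the paper's bound by a genuinely different route in the part that concerns $n_1$. The paper inserts both $\mfe{}$ and $\mse{}$ into the exact algebraic expansion of $D[\mf{}:\ms{}]$, obtaining an identity with three cross terms; it then needs a stochastic Taylor expansion of $\sqrt{\mfe{\bm{j}(l)}}$ about $\sqrt{\mf{\bm{j}(l)}}$ to show one cross term is $\leq o(n_1^{-1})$, and an exact algebraic cancellation to eliminate $ED[\mf{}:\mfe{}]$ against another. You replace all of this with a single conditional Jensen step: since $D[\,\cdot\,:\mse{}]$ is convex in its first argument and $E[\mfe{}\mid\xs]=\mf{}$ (the bins being $\xs$-measurable and $\xf$ independent of $\xs$), you get $E\bigl[D[\mf{}:\mse{}]\bigr]\le ED[\mfe{}:\mse{}]$ exactly, with no asymptotics in $n_1$ at all --- so the $o(n_1^{-1})$ slack in \eqref{ineq_bias} is not even needed, a mild sharpening of the stated result. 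The $n_2$-part of your argument --- bounding the remaining cross term by $4\sqrt{ED[\mse{}:\ms{}]}$ via the pointwise bound $\sqrt{D[\mf{}:\mse{}]}\le 2$ and Jensen for the square root, then invoking Theorem \ref{asymp_risk_moving} with the model distribution in the role of the true distribution --- is essentially identical to the paper's treatment of its term A3, and your verification that the Hellinger $f$ meets that theorem's smoothness hypothesis is the right thing to check. What your approach buys is economy and rigor on the $n_1$ side (no Taylor remainder to control); what the paper's exact decomposition buys is visibility of the sign and magnitude of the $n_1$-bias, namely its term A1, $-\tfrac{1}{2n_1}\sum_{\bm{j}(l)} E\bigl[(\mf{\bm{j}(l)})^{-1/2}(1-\mf{\bm{j}(l)})\bigr]\sqrt{\mse{\bm{j}(l)}}+o(n_1^{-1})$, which is exactly the quantity your parenthetical remark alludes to.
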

\begin{proof}
Notice that
\begin{align*}
&2\sum_{\bm{j}(l)} \Bigl(\sqrt{m^{(1)}_{\bm{j}(l)}}-\sqrt{m^{(2)}_{\bm{j}(l)}}\Bigr)^2\\
&2\sum_{\bm{j}(l)} \Bigl( \sqrt{\mf{\bm{j}(l)}} - \sqrt{\mfe{\bm{j}(l)}} +\sqrt{\mfe{\bm{j}(l)}} - \sqrt{\mse{\bm{j}(l)}} +\sqrt{\mse{\bm{j}(l)}} - \sqrt{\ms{\bm{j}(l)}}\Bigr)^2 \\
& = D\Bigl[ \mf{} : \mfe{} \Bigr] + D\Bigl[\mfe{} : \mse{} \Bigr] + D\Bigl[\mse{} : \ms{} \Bigr] \\
&\qquad + 4 \sum_{\bm{j}(l)} \Bigl(\sqrt{\mf{\bm{j}(l)}} - \sqrt{\mfe{\bm{j}(l)}}\Bigr)\Bigl(\sqrt{\mfe{\bm{j}(l)}} - \sqrt{\mse{\bm{j}(l)}}\Bigr)\\
&\qquad + 4 \sum_{\bm{j}(l)} \Bigl(\sqrt{\mf{\bm{j}(l)}} - \sqrt{\mse{\bm{j}(l)}}\Bigr)\Bigl(\sqrt{\mse{\bm{j}(l)}} - \sqrt{\ms{\bm{j}(l)}}\Bigr).
\end{align*}
Therefore, we have:
\begin{equation}
\label{ED_expression}
\begin{aligned}
&ED[\mf{} : \ms{}]\\
&=ED\Bigl[\mfe{} : \mse{} \Bigr] + ED\Bigl[ \mf{} : \mfe{} \Bigr] + ED\Bigl[\mse{} : \ms{} \Bigr]\\
&\qquad +4E\Bigl[\sum_{\bm{j}(l)} \Bigl(\sqrt{\mfe{\bm{j}(l)}} - \sqrt{\mf{\bm{j}(l)}}\Bigr)   \sqrt{\mse{\bm{j}(l)}} \Bigr] \text{(say A1)}\\
&\qquad +4 E\Bigl[\sum_{\bm{j}(l)} \Bigl( \sqrt{\mf{\bm{j}(l)}}-\sqrt{\mfe{\bm{j}(l)}}\Bigr)\sqrt{\mfe{\bm{j}(l)}}\Bigr] \text{(say A2)}\\
&\qquad + 4E\Bigl[ \sum_{\bm{j}(l)} \Bigl(\sqrt{\mf{\bm{j}(l)}} - \sqrt{\mse{\bm{j}(l)}}\Bigr)\Bigl(\sqrt{\mse{\bm{j}(l)}} - \sqrt{\ms{\bm{j}(l)}}\Bigr)\Bigr] \text{(say A3)}.
\end{aligned}
\end{equation}
First, we evaluate A1. Notably, the conditional asymptotic distribution of $\mfe{\bm{j}(l)}$ when $\xs{}$ is given by:
\[
\sqrt{n_1} (\mfe{\bm{j}(l)} - \mf{\bm{j}(l)} ) \stackrel{d}{\to} N(0, \mf{\bm{j}(l)}(1-\mf{\bm{j}(l)})),
\]
Thus, the Taylor expansion of $\sqrt{\mfe{\bm{j}(l)}}$ around $\sqrt{\mf{\bm{j}(l)}}$ leads to:
\begin{align*}
\sqrt{\mfe{\bm{j}(l)}} - \sqrt{\mf{\bm{j}(l)}} & = \frac{1}{2}\Bigl(\mfe{\bm{j}(l)} - \mf{\bm{j}(l)} \Bigr)\Bigl(\mf{\bm{j}(l)}\Bigr)^{-1/2} \\
& \qquad -\frac{1}{8}\Bigl(\mfe{\bm{j}(l)} - \mf{\bm{j}(l)} \Bigr)^2\Bigl(\mf{\bm{j}(l)}\Bigr)^{-3/2} + o_p(n_1^{-1})\\
\end{align*}
From this expansion, we have 
\[
E_{\xf}[\mfe{\bm{j}(l)}-\mf{\bm{j}(l)} | \xs ]=0,\qquad E_{\xf} [(\mfe{\bm{j}(l)}-\mf{\bm{j}(l)})^2 | \xs]=n_1^{-1}\mf{\bm{j}(l)}(1-\mf{\bm{j}(l)}),
\]
and
\[
E_{\xf}\Bigl[ \sqrt{\mfe{\bm{j}(l)}} - \sqrt{\mf{\bm{j}(l)}}  \Big| \xs \Bigr] = -\frac{1}{8n_1} \Bigl(\mf{\bm{j}(l)}\Bigr)^{-1/2}\Bigl(1-\mf{\bm{j}(l)}\Bigr) + o(n_1^{-1}).
\]
Consequently, A1 was evaluated as follows: 
\begin{equation}
\begin{aligned}
\label{A1}
&4E\Bigl[\sum_{\bm{j}(l)} \Bigl(\sqrt{\mfe{\bm{j}(l)}} - \sqrt{\mf{\bm{j}(l)}}\Bigr)   \sqrt{\mse{\bm{j}(l)}} \Bigr] \\
&= -\frac{1}{2n_1} \sum_{\bm{j}(l)} E_{\xs}\Bigl[ \Bigl(\mf{\bm{j}(l)}\Bigr)^{-1/2}\Bigl(1-\mf{\bm{j}(l)} \Bigr) \Bigr]\sqrt{\mse{\bm{j}(l)}} + o(n_1^{-1}) \\
&\leq  o(n_1^{-1}).
\end{aligned}
\end{equation}
Next, we proceed to calculate A2. Using the relation
\[
4\sum_{\bm{j}(l)} \sqrt{\mf{\bm{j}(l)}}\sqrt{\mfe{\bm{j}(l)}} = -D[\mf{} : \mfe{}] + 4,
\]
we have
\begin{align*}
&4\sum_{\bm{j}(l)} \Bigl( \sqrt{\mf{\bm{j}(l)}}-\sqrt{\mfe{\bm{j}(l)}}\Bigr)\sqrt{\mfe{\bm{j}(l)}}\\
&= 4\sum_{\bm{j}(l)} \sqrt{\mf{\bm{j}(l)}}\sqrt{\mfe{\bm{j}(l)}} -4 \\
&= -D[\mf{} : \mfe{}].
\end{align*}
Therefore, A2 equals
\begin{equation}
\label{A2}
4 E\Bigl[\sum_{\bm{j}(l)} \Bigl( \sqrt{\mf{\bm{j}(l)}}-\sqrt{\mfe{\bm{j}(l)}}\Bigr)\sqrt{\mfe{\bm{j}(l)}}\Bigr] = -ED[\mf{} : \mfe{}].
\end{equation}
For A3, by the Cauchy-Schwarz inequality, we have
\begin{align*}
&4\Bigl | \sum_{\bm{j}(l)} \Bigl( \sqrt{\mf{\bm{j}(l)}}-\sqrt{\mse{\bm{j}(l)}}\Bigr)\Bigl(\sqrt{\mse{\bm{j}(l)}}-\sqrt{\ms{\bm{j}(l)}}\Bigr)\Bigr| \\
&\leq 2 \Bigl(D[\mf{} : \mse{}]\Bigr)^{1/2} \Bigl(D[\mse{} : \ms{}]\Bigr)^{1/2}\\
&\leq  4\Bigl(D[\mse{} : \ms{}]\Bigr)^{1/2} ,
\end{align*}
where the final equality is because
\[
D[\mf{} : \mse{}] = 4- 4\sum_{\bm{j}(l)} \sqrt{\mf{\bm{j}(l)} \mse{\bm{j}(l)}} \leq 4.
\]
Therefore, A3 was evaluated as follows:
\begin{equation}
\label{A3}
\begin{aligned}
&\Bigl | 4E\Bigl[\sum_{\bm{j}(l)}  \Bigl( \sqrt{\mf{\bm{j}(l)}}-\sqrt{\mse{\bm{j}(l)}}\Bigr)\Bigl(\sqrt{\mse{\bm{j}(l)}}-\sqrt{\ms{\bm{j}(l)}}\Bigr)\Bigr]\Bigr| \\
&\leq 4 E\Bigl[\Bigl(D[\mse{} : \ms{}]\Bigr)^{1/2}\Bigr] \\
&\leq 4 \Bigl(ED[\mse{} : \ms{}]\Bigr)^{1/2},
\end{aligned}
\end{equation}
where the last inequality comes from Jensen's inequality. 

From \eqref{A1}, \eqref{A2}, \eqref{A3}, and \eqref{ED_expression}, we have:
\begin{equation}
\begin{aligned}
&ED[\mf{} : \ms{}]\\
&\leq ED\Bigl[\mfe{} : \mse{} \Bigr]  + ED\Bigl[\mse{} : \ms{} \Bigr] +4 \Bigl(ED[\mse{} : \ms{}]\Bigr)^{1/2} + o(n_1^{-1}).\\
\end{aligned}
\end{equation}
From \eqref{ED_P_asymp}, we have:
\[
ED\Bigl[\mse{} : \ms{} \Bigr] = \frac{p'}{2n_2} + o(n_2^{-1}),
\]
thus completing the proof.
\end{proof}
As in \eqref{ineq_bias}, $n_2$ is in the order of 1/2, whereas that of $n_1$ equals 1. Hence, a relatively large $n_2$ is evidently required. For this reason, the sample from the model is used to formulate bins in the moving region method. We are often able to draw the sample from the model at a relatively low cost compared to observations or experiments for the mother distribution.

In relation to the above proof, we illustrate the disadvantage of using the fixed region method for the two-sample problem. For simplicity, we consider the case where $\bm{X}$ is a scalar with support $(-\infty,\ \infty)$. Taking fixed intervals,
\[
I_i^* \triangleq (\xi_i^{(2)}\ \xi_{i+1}^{(2)}), \quad i=0,\ldots,p,
\]
where $\xi_i^{(2)}$ is the percentile of the model distribution, defined as:
\[
\xi_i^{(2)} \triangleq F_{\xs}^{-1}(i/p+1), \quad i= 1,\ldots,p, \qquad \xi^{(2)}_0 = -\infty,\quad \xi^{(2)}_{p+1} = \infty,
\]
where  $F_{\xs}$ is the cumulative distribution function of the model distribution.

The corresponding multinomial distributions over these intervals deduced from the mother and model distributions are given by parameters
\[
m^{(1|2)} \triangleq (m^{(1|2)}_0,\ldots,m^{(1|2)}_p),\qquad m^{(1|2)}_i \triangleq P(\xf \in I_i^*),\ i=0,\ldots,p
\]
and $m=(1/(p+1),\ldots,1/(p+1))$. We  use $ED[m^{(1|2)} :\, m]$ as the discrepancy measure between the mother and model distributions, and estimate it using $D[\mfe{}\,: \,m]$, where 
\[
\mfe{} \triangleq (\mfe{0},\ldots,\mfe{p}),\qquad \mfe{i}\triangleq \#\{\xf \ |\ \xf \in I_i \}/n_1,\ i=0,\ldots,p,
\]
with $I_i$ defined as in \eqref{def_I_i} with $X_2, n_2$ instead of $X_1, n$.
The bias 
\[
ED[\mfe{}\,: \,m] - ED[m^{(1|2)} \, : \, m]  
\]
needs to be evaluated. This bias depends on the value of the probability density function of the mother distribution (refer to Appendix), making the evaluation more sensitive and difficult than the result of Theorem \ref{bias_two_sample}.
%
%
%  Criterion
%
%
%
\section{Criterion for Closeness between Mother and Model Distributions}
\label{criteria}
By combining Theorem \ref{bias_two_sample}, Theorem \ref{ED_I_one_sample} and \eqref{simple_threshold}, the following simple criteria can be derived for the closeness between the mother and model distributions:
\\
\\
-- \textbf{Model Fitness Criterion} --
\\
As the threshold for closeness between two distributions, use  $1/2-\epsilon$ for the Bayes error rate, then if 
\begin{equation}
\label{inequality_closeness}
D[\mfe{} : \mse{}]  + \frac{p'}{2n_1} + \sqrt{\frac{8p'}{n_2}} < 8\epsilon^2,
\end{equation}
 the mother and model distributions are sufficiently close.

We note some points regarding this criterion.
\begin{enumerate}
\item From Theorem \ref{bias_two_sample}, as the estimator of $ED[\mf{} : \ms{}]$, 
\[
D[\mfe{} : \mse{}]  + \sqrt{\frac{8p'}{n_2}}
\]
is given after the lower-side bias correction. The first term $D[\mfe{} : \mse{}]$ is a random variable and converges to $D[\mf{} : \mse{}]$. The following inequality holds
\[
\Bigl | D[\mf{} : \mse{}] - D[\mfe{} : \mse{}] \Bigr | \leq D[\mf{} : \mfe{}].
\]
Applying Theorem \ref{ED_I_one_sample} to the distribution of $\xf$ given $\xs$, we have 
\[
ED[\mf{} : \mfe{}] = \frac{p'}{2n_1} + o(n_1^{-1}).
\]
From these, we have
\[
 E\Bigl[\Bigl | D[\mf{} : \mse{}] - D[\mfe{} : \mse{}] \Bigr| \Bigr] \leq \frac{p'}{2n_1} + o(n_1^{-1}).
\]
Using this inequality, we can evaluate the perturbation of $D[\mfe{} : \mse{}]$ around  $D[\mf{} : \mse{}]$ from the sample size $n_1$, which leads to the criteria \eqref{inequality_closeness}.
\item $n_2$ affects the bias of $ D[\mfe{} : \mse{}] $ in the order of $n_2^{-1/2}$, while the expected perturbation of $D[\mfe{} : \mse{}]$ decreases in $n_1^{-1}$ order. This order difference  indicates that we need a relatively larger size of the sample from the model compared to that from the mother distribution. As mentioned in the Introduction section, it is typically easier to get the sample from the model than from the mother distribution because the former is usually obtained through simulation on a computer.
\end{enumerate}
We now state the simulation result as an example of the use of the criterion.
\\
\\
-- \textit{Simulation Results} -- \\
\\
Case1: Multivariate Normal Distributions\\
We conducted a simulation study under the following conditions: the sample $\xf_i,\ i=1,\ldots,n_1$ from the mother distribution is independently and identically distributed as the $k$-dimensional normal distribution, that is, 
\[
\xf_i \sim N(\alpha, I_k + \beta V),
\]
where $I_k$ is the $k$-dimensional identity matrix and $(V)_{ij} = 0.95^{|i-j|}$ for $i, j=1,\ldots,p$. For the model distribution, we used the $k$-dimensional standard normal distribution; $\xs_i,\ i=1,\ldots,n_2$ is independently and identically distributed as
\[
\xs_i \sim N(\bm{0}, I_k).
\]

First, we performed a simulation for the case wherein both the dimension $k$ and the partition depth $l$ are equal to 3, and each variable is partitioned by the sample quantile, that is, $p=p_{\bj(1)}=p_{\bj(2)}=3$ hence, $p'= (3+1)^3-1=63.$ We changed the similarity between the mother and model distributions as $(\alpha, \beta)=(0,0), (0.01,0.01),(0.1,0.1),(1,1)$. We also calculated $D[\mfe{} : \mse{}]$ and the entire value of the left-hand side in \eqref{inequality_closeness}, including the bias correction term
\begin{equation}
\label{bias_correct}
\frac{p'}{2n_1} + \sqrt{\frac{8p'}{n_2}} .
\end{equation}

If we adopt the threshold $\epsilon$ of $0.05$, the right-hand side of \eqref{inequality_closeness} equals $0.02$. Originally, we simulated all possible pairings of $n_1$ and $n_2$ from $10^i,\ i=3,\ldots,7$. As \eqref{bias_correct} is over 0.02, unless $n_2=10^7$ and $n_1\geq 10^4$, we state the result only for these values of $n_1$ and $n_2$. Tables \ref{albeta_0} -- \ref{albeta_1} present the results. 

In the case $(\alpha, \beta)=(0, 0)$(which means that both distributions are the same) and the case $(\alpha, \beta)=(0.01, 0.01)$ from the model fitness criteria \eqref{inequality_closeness} with $\alpha=0.05$, we can conclude that the mother and model distributions are sufficiently close. On the other hand, when $(\alpha, \beta)=(0.1, 0.1)$ or $(\alpha, \beta)=(1, 1)$, both distributions are not judged to be sufficiently close. Notably, if we loosen the threshold to $\alpha=0.1$, then they are sufficiently close when $(\alpha, \beta)=(0.1, 0.1)$.

Second, we perform a simulation for the case $k=10$, $p=p_{\bj(1)}=\cdots =p_{\bj(9)}=3$. If we take the full depth of the partition (i.e., $l=10$), then $p'$ becomes $(3+1)^{10}-1=1048575$ and the sample size required (especially for $n_2$) is prohibitively large. For example, to satisfy the condition that \eqref{bias_correct} is less than 0.02, $n_2$ must be greater than $2*10^{10}$. 

In many practical cases,  two-dimensional distributions of paired variables are of interest in view of the similarity between the two distributions. Hence, we choose $l$ to be $2$ and consider all possible pairings between the two variables. Tables \ref{2dim_n1=10^3} and \ref{2dim_n1=10^4} show the entire values of \eqref{inequality_closeness} when $n_1=10^3,\ n_2=10^7$ (Table 3) and $n_1=10^4,\ n_2=10^7$ (Table 4) for the case $(\alpha, \beta)=(0.1, 0.1)$. For every pairing, we observe that, when $n_1=10^4, n_2=10^7$, the bias is small enough to conclude that the two distributions are similar under the criteria of $\epsilon=0.05$, whereas the bias is too large to reach a conclusion when $n_1=10^3, n_2=10^7$.
\\
\\
Case 2: Bayes Model\\
The posterior distribution in  Bayes model is usually complicated and its density function is not explicitly gained. In most cases, we only have a MCMC sample generated from the posterior distribution. Here, as an example, we consider the following normal regression model using the dataset on a power plant from the UCI machine learning repository \\
(https://archive.ics.uci.edu/ml/datasets/Combined+Cycle+Power+Plant) ;
\[
y =  \beta_1  + \sum_{i=2}^5 \beta_i  x_i +\epsilon,\qquad \epsilon \sim N(0, \sigma)
\]
where $y$ is ``Electrical Output'', and $x_i$'s are respectively ``Temperature'', ``Ambient Pressure'', ``Relative Humidity'' and ``Exhaust Vacuum'' . We consider Bayes modeling with the following prior distributions
\[
\beta_1 \sim \text{Cauchy(0, 10)},\qquad \beta_i \sim \text{Cauchy(0,2.5)},\ i=2,\ldots,5,\qquad \sigma \sim \text{t(5,5,1)},
\]
where Cauchy(0, 10) means the Cauchy distribution with center 0 and scale 10, t(5,5,1) means $t$-distribution with d.f. 5, center 5, and scale 1 (see \cite{Gelman_et.al} for the weakly informative prior distributions).
Let
\[
\hat{y}_j = \beta_1 + \sum_{i=2}^5 \beta_i x_{i,j}, \qquad  j=1,\ldots, N
\]
be the predictor of $y_j$ for each observation $(x_{2,j},\ldots, x_{5,j})$ where $N$ is the sample size. Given the posterior distributions of $\beta$'s,  $\hat{y}$ has some distribution. We can take this Bayes model as the distribution model of $\hat{y}$ that mimics the distribution of the original $y$. However the exact density functions of $\hat{y}$ and $y$ are respectively complicated and unknown, hence we need to measure closeness between $\hat{y}$ and $y$ from their samples. In the simulation, as MCMC posterior sample, we gained 4000 $\beta=(\beta_1,\ldots,\beta_5)$. Since $N=9568$ ($n_1$ in \eqref{inequality_closeness}), we have the sample of $\hat{y}$ of size $4000N=3827\text{e03}$ ($n_2$ in \eqref{inequality_closeness}). 

We set $p'$ to be 10 and drew a histogram for both $\hat{y}$ and $y$ (see Figure \ref{two_hist}) .
\begin{figure}
\caption{Histograms of $\hat{y}$ and $y$}
\centering
\includegraphics[width=15cm]{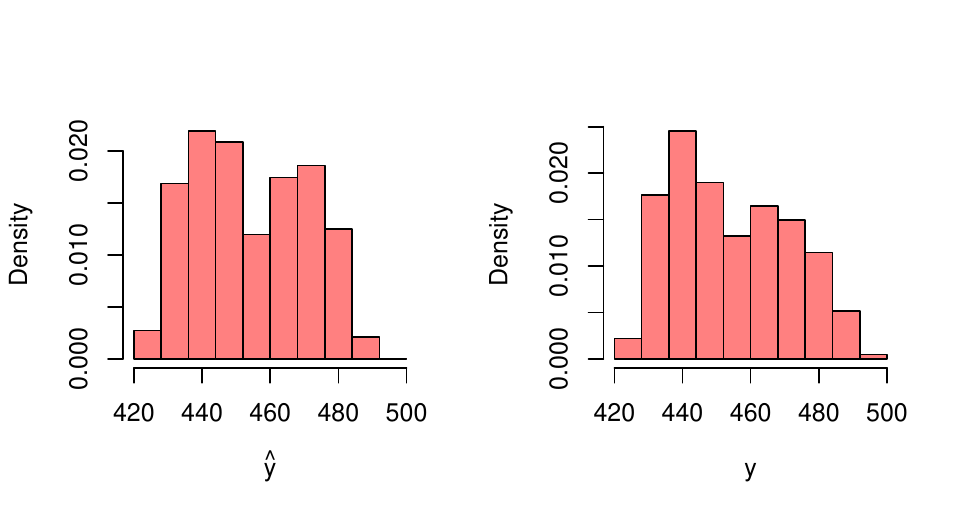}
\label{two_hist}
\end{figure}
$p$-value for two-sided Kolmogolov-Smirnov  statistic is 7.587e-08 and the null hypothesis is rejected even for the extremely low significance level.

We have the following results on \eqref{inequality_closeness};
\[
D[\mfe{} : \mse{}] \doteqdot 0.0113,\qquad \frac{p'}{2n_1} + \sqrt{\frac{8p'}{n_2}}\doteqdot 0.0020
\]
and the whole value of the left side of \eqref{inequality_closeness} is approximately 0.0133. The solution of $\epsilon$ for the equation of 
$8\epsilon^2 = 0.0133$ gives $\epsilon\doteqdot 0.04$, which means the corresponding Bayes error rate $0.46$. Consequently under 0.45 Bayes error rate, the distribution model is sufficiently close to the true distribution.
%
%
%
%  Appendix
%
%
%
\section{Appendix}
%
%
% Proof of Theorem 1
%
%

\subsection{Proof of Theorem \ref{Theo:Error and Div}}
	Let $\{S^{(n)}_i \ | \  i=1,\ldots,m(n)\}$ and $(n=1,2,\ldots)$ be a sequence of partitions of $\mathcal{X}$. 
\[
\mathcal{X} =\bigcup_{i=1}^{m(n)} S^{(n)}_i,\qquad S^{(n)}_i \bigcap S^{(n)}_j = \phi \text{ for $i \ne j$}.
\]
We choose $S^{(n)}_i$ and the associated step functions $\tilde{g}^{(n)}_j(x) = \sum_{i=1}^{m(n)} c^{(n)}_{ji} I(x \in S^{(n)}_i),\ j=1,2$, such that 
\begin{align}
\frac{1}{2}\int_{\mathcal{X}} \min\Bigl(\tilde{g}^{(n)}_1(x) , \tilde{g}^{(n)}_2(x)\Bigr) d\mu
 &= \frac{1}{2}\sum_{i=1}^{m(n)}  \int_{S^{(n)}_i} \min(c^{(n)}_{1i}, c^{(n)}_{2i})  d\mu \nonumber\\
 &=\frac{1}{2}\sum_{i=1}^{m(n)}  \min(1, c^{(n)}_{2i}/c^{(n)}_{1i}) \int_{S^{(n)}_i} c^{(n)}_{1i} d\mu \label{appro_error}\\
\int_{\mathcal{X}} \tilde{g}^{(n)}_1(x) f \Bigl( \tilde{g}^{(n)}_2(x)/\tilde{g}^{(n)}_1(x) \Bigr) d\mu &= \sum_{i=1}^{m(n)} f (c^{(n)}_{2i}/c^{(n)}_{1i} ) \int_{S^{(n)}_i} c^{(n)}_{1i} d\mu, \label{appro_dive}
\end{align}  
converges as $n \to \infty$ to
	\begin{align*}
		&Er[g_1(x)\, | \, g_2(x)] = \frac{1}{2}\int_{\mathcal{X}} \min\Bigl(g_1(x), g_2(x)\Bigr) d\mu, \\
		&D_f[g_1(x) \,| \, g_2(x)] = \int_{\mathcal{X}} g_1(x) f \Bigl( g_2(x)/g_1(x) \Bigr) d\mu.
	\end{align*}
Furthermore, we assume that the partitions satisfy 
	\[
	\int_{S^{(n)}_i} c^{(n)}_{1i} d\mu = 1/m(n), \quad i=1,\ldots,m(n).
	\]
Hereafter, we fix $n$ and delete script $(n)$ for notational simplicity. 
Then, \eqref{appro_error} and \eqref{appro_dive} equal 
	\begin{align*}
		&\frac{1}{2m}\sum_{i=1}^m \min (1, \Delta_i) \ (\triangleq t(\Delta_1,\ldots, \Delta_m) ) \\
		&\frac{1}{m}\sum_{i=1}^m f(\Delta_i),
	\end{align*} 
	where $\Delta_i \triangleq c_{2i}/c_{1i},\ i=1,\ldots,m$. Suppose $D_f[g_1(x) \,| \, g_2(x)]< \delta$. Subsequently, for sufficiently large $n$, 
\begin{equation}
		\label{diver_cond}
		\frac{1}{m} \sum_{i=1}^m f(\Delta_i) \leq  \delta.
\end{equation}
	The lower bound of $t(\Delta_1,\ldots,\Delta_m)$ is searched under the condition of \eqref{diver_cond}. Let 
	\begin{equation}
		\label{delta_sum}
		\tilde{m} \triangleq \sum_{i=1}^m \Delta_i,\qquad \tilde{1} \triangleq  \frac{\tilde{m}}{m}.
	\end{equation}
	As partition $S_i,\ i=1,\ldots,m$ becomes finer,
	\[
	\sum_{i=1}^m \int_{S_i}  c_{2i} d\mu =\sum_{i=1}^m \Delta_i/m = \tilde{1} \to \int_{\mathcal{X}} g_2(x) d\mu  =1.
	\]
	
		Without loss of generality, the following can be assumed:   
	\[
	\Delta_1 \geq  \cdots \geq \Delta_s > 1 > \Delta_{s+1} \geq \cdots \geq \Delta_m > 0,\quad \exists s (\geq 1). 
	\]
	Let $u=m-s$ and 
	\[
	\Delta^+ \triangleq  \frac{1}{s} \sum_{i=1}^s \Delta_i,\qquad \Delta^- \triangleq \frac{1}{u} \sum_{i=s+1}^m \Delta_{i}.
	\]
	Note that 
    
	\begin{align}
    &s\Delta^+ + u \Delta^{-} = \tilde{m} \label{sum_delta}\\
	&t(\underbrace{\Delta^+, \cdots, \Delta^+}_s, \underbrace{\Delta^-,\cdots, \Delta^-}_u) = t(\Delta_1,\ldots,\Delta_m) \label{t_delta}
	\end{align}
	and, because of the convexity of $f(\Delta_i)$,
	\begin{equation}
	\frac{1}{m}\{sf(\Delta^+) + u f(\Delta^-)\} \leq \frac{1}{m}\sum_{i=1}^m f(\Delta_i) \leq \delta. \label{main_cond}
	\end{equation}
With $x \triangleq s/m$, \eqref{sum_delta} and \eqref{main_cond} can be rewritten as
\begin{align}
& x \Delta^+ +(1-x) \Delta^- = \tilde{1} \label{sum_delta_2},\\
& x f(\Delta^+) + (1-x) f(\Delta^-) \leq \delta. \label{main_cond2}
\end{align}
From \eqref{sum_delta_2}, we have:
\begin{align*}
		t(\Delta_1,\ldots,\Delta_m)& = \frac{1}{2m}\sum_{i=1}^m \min (1, \Delta_i)\\
		& = \frac{1}{2m}( s + u\Delta^- ) \\
		& =\frac{1}{2}( x + (1-x) \Delta^- ) \\
		&= \frac{1}{2} \bigl(\tilde{1}+x(1-\Delta^+)\bigr) \ \bigl(\triangleq t(x;\Delta^+) \bigr),
\end{align*}
while \eqref{main_cond2} can be rewritten as 
\begin{equation}
\label{main_cond3}
h(x;\Delta^+) \triangleq x f(\Delta^+) +(1-x)  f\Bigl(\frac{\tilde{1}-x\Delta^+}{1-x}\Bigr) \leq \delta.
\end{equation}
As
\[
  0 < \Delta^- = \frac{\tilde{1}-x\Delta^+}{1-x},
\]
we have
\[
x < \frac{\tilde{1}}{\Delta^+} < 1,
\]
where the second inequality follows from the fact that
\[
\tilde{1} \triangleq \frac{\sum_{i=1}^m \Delta_i}{m} < \Delta^+.
\]
Now, we consider the lower bound of function $t(x;\Delta^+)$ over the interval
$(0, \ \tilde{1}/\Delta^+)$ under restriction \eqref{main_cond3}. Notably,
 $h(x;\Delta^+)$ is an increasing function because
\begin{align*}
		\frac{d }{dx} h(x;\Delta^+)  &= f(\Delta^+) -f\Bigl(\frac{\tilde{1}-x\Delta^+}{1-x}\Bigr)+f'\Bigl(\frac{\tilde{1}-x\Delta^+}{1-x}\Bigr)\frac{\tilde{1}-\Delta^+}{1-x} \\
&\geq  -f\Bigl(\frac{\tilde{1}-x\Delta^+}{1-x}\Bigr)+f'\Bigl(\frac{\tilde{1}-x\Delta^+}{1-x}\Bigr)\frac{\tilde{1}-\Delta^+}{1-x} \\
&\geq f'\Bigl(\frac{\tilde{1}-x\Delta^+}{1-x}\Bigr)\Bigl(1-\frac{\tilde{1}-x\Delta^+}{1-x}\Bigr)+f'\Bigl(\frac{\tilde{1}-x\Delta^+}{1-x}\Bigr)\frac{\tilde{1}-\Delta^+}{1-x} 
\quad\text{($f(x)$ is convex)}\\
&= f'\Bigl(\frac{\tilde{1}-x\Delta^+}{1-x}\Bigr)(1-\Delta^+) > 0. \quad\text{( if $x<1$, $f'(x)<0$)}
\end{align*}
Consequently, if 
\begin{equation}
\label{left_cond_h_yes}
h(\tilde{1}/\Delta^+; \Delta^+) \leq \delta,
\end{equation}
then any $x$ in the domain $(0, \ \tilde{1}/\Delta^+)$ satisfies the inequality \eqref{main_cond3}. However, if \eqref{left_cond_h_yes} does not hold true,
\eqref{main_cond3} is equivalent to $x \in (0,\  x^*]$, where $x^*$ satisfies
\begin{equation}
\label{sol_x^*}
h(x^*;\Delta^+) = x^* f(\Delta^+) +(1-x^*)  f\Bigl(\frac{\tilde{1}-x^*\Delta^+}{1-x^*}\Bigr)=\delta.
\end{equation}
First, we consider that Case \eqref{left_cond_h_yes} holds. From 
\[
h(\tilde{1}/\Delta^+; \Delta^+) = \Bigl(\frac{\tilde{1}}{\Delta^+}\Bigr)f(\Delta^+) +
\Bigl(1-\frac{\tilde{1}}{\Delta^+}\Bigr)f(0),
\]
we have
\begin{align*}
\frac{d\quad}{d\Delta^+}h(\tilde{1}/\Delta^+; \Delta^+)
&= -\frac{\tilde{1}}{(\Delta^+)^2}f(\Delta^+) + \frac{\tilde{1}}{\Delta^+}f'(\Delta^+)+ \frac{\tilde{1}}{(\Delta^+)^2}f(0)\\
&= \frac{\tilde{1}}{(\Delta^+)^2}\bigl(f(0) -f(\Delta^+) +f'(\Delta^+) \Delta^+\bigr)\\
&\geq \frac{\tilde{1}}{(\Delta^+)^2}\bigl( -f(\Delta^+) +f'(\Delta^+) \Delta^+\bigr)\qquad\text{(since $f(0) >0$)}\\
&\geq \frac{\tilde{1}}{(\Delta^+)^2}\bigl(-(\Delta^+ -1)f'(\Delta^+) + f'(\Delta^+)\Delta^+\bigr)\qquad\text{(since $f(x)$ is convex)}\\
&= \frac{\tilde{1}}{(\Delta^+)^2}f'(\Delta^+)\\
& > 0. \qquad\text{(since if $x>1$, then $f'(x) > 0$)}
\end{align*}
Therefore, \eqref{left_cond_h_yes} implies that $\Delta^+ < \Delta^+_*(\tilde{1})$, where $\Delta^+_*(\tilde{1})$ is the solution to the equation 
\begin{equation}
\label{sol_Delta^+_*}
h(\tilde{1}/\Delta^+; \Delta^+)=\Bigl(\frac{\tilde{1}}{\Delta^+}\Bigr)f(\Delta^+) +
\Bigl(1-\frac{\tilde{1}}{\Delta^+}\Bigr)f(0)=\delta.
\end{equation}
Consequently, if \eqref{left_cond_h_yes} holds, then:
\begin{equation}
\label{low_bound_for_cond_yes}
t(x ;\Delta^+) \geq  t(\tilde{1}/(\Delta^+) ;\Delta^+) = \frac{\tilde
{1}}{2\Delta^+} > \frac{\tilde{1}}{2\Delta^+_*(\tilde{1})}
\end{equation}

Now, we consider that case \eqref{left_cond_h_yes} does not hold true. In this case, inequality 
\[
t(x ;\Delta^+) \geq t(x^*; \Delta^+) = \frac{1}{2}\bigl(\tilde{1} + x^*(1-\Delta^+)\bigr) \bigl(\triangleq t^*\bigr)
\]
holds. From \eqref{sol_x^*} and the relation: 
\begin{equation}
		\label{del_by_t_x}
		\Delta^+ = (\tilde{1}-2t^*)/x^* +  1,
\end{equation}
we have
	\[
	x^*f\Bigl(\frac{\tilde{1}-2t^*}{x^*} + 1\Bigr)+(1-x^*) f\Bigl(\frac{2t^*-1}{1-x^*}+1\Bigr) = \delta,
	\]
because $\Delta^+ >1$ and $x^* < \tilde{1}/\Delta^+$; 
\[
0 < x^* < 2t^* < \tilde{1}.
\]
Therefore
\begin{equation}
\label{low_bound_for_cond_no}
t(x;\Delta^+) \geq  \inf{\{t^* \,|\, (x^*,t^*) \in \tilde{A}(\delta)\}}.
\end{equation}
where
\[
\tilde{A}(\delta) \triangleq \Bigl\{(x^*,t^*)\, \Big| \, x^* f \Bigl(\frac{\tilde{1}-2t^*}{x^*} + 1\Bigr)+(1-x^*) f\Bigl(\frac{2t^*-1}{1-x^*}+1\Bigr) = \delta,\quad 0 < x^* < 2t^* < \tilde{1}.\Bigr\}.
\]
From \eqref{low_bound_for_cond_yes} and \eqref{low_bound_for_cond_no}, we have
\[
t(x;\Delta^+) \geq \min \Bigl\{ 
\tilde{1}/\bigl(2\Delta^+_*(\tilde{1})\bigr), \inf{\{t^* \,|\, (x^*,t^*) \in \tilde{A}(\delta)\}}
\Bigr\}.
\]	
If we take the limit as $n \to \infty$, we have the result.
%
%
%
%  Proof of Theorem 3
%
%
%
\subsection{Proof of Theorem \ref{asymp_risk_moving}}
Let 
\[
R_{\bj(l)} \triangleq \frac{\hat{m}_{\bj(l)}}{m_{\bj(l)}} -1.
\]
As $f(1)=0,\ f'(1)=0,\ f''(1)=1$, we have the following expansion $D_f[m: \hat{m}]$ with respect to $R_{\bj(l)}$ around zero: 
\begin{align}
\label{expan_D_f}
&D_f[m: \hat{m}]\nonumber\\
&=\sum_{ \bj(l) } m_{\bj(l)} f(1+R_{\bj(l)})\nonumber\\
&=\sum_{\bj(l)} m_{\bj(l)} \Bigl(f(1)+f'(1)R_{\bj(l)}+\frac{1}{2}f''(1)R_{\bj(l)}^2+\frac{1}{6}f^{(3)}(1+R^*_{\bj(l)})R_{\bj(l)}^3\Bigr)
\nonumber\\
&=\frac{1}{2}\sum_{\bj(l)} m_{\bj(l)} R_{\bj(l)}^2 +\frac{1}{6}\sum_{\bj(l)}f^{(3)}(1+R^*_{\bj(l)}) m_{\bj(l)} R_{\bj(l)}^3\nonumber\\
\end{align}
where $R^*_{\bj(l)}$ is  a smooth function of $R_{\bj(l)}$; hence, $\hat{m}_{\bj(l)}$ such that 
\begin{equation}
\begin{split}
\label{cond_R^*}
&0 < R^*_{\bj(l)}(\hat{m}_{\bj(l)}) < R_{\bj(l)}(\hat{m}_{\bj(l)}), \text{ if $R_{\bj(l)} (\hat{m}_{\bj(l)})> 0$},\\
&0  > R^*_{\bj(l)} (\hat{m}_{\bj(l)}) > R_{\bj(l)}(\hat{m}_{\bj(l)})\geq -1, \text{ if $R_{\bj(l)} (\hat{m}_{\bj(l)})< 0$}.
\end{split}
\end{equation}
Suppose
\[
\epsilon \triangleq \min_{0 \leq \hat{m}_{\bj(l)} \leq 1} R^*_{\bj(l)} (\hat{m}_{\bj(l)}).
\]
From \eqref{cond_R^*}, we have:
\[
R^*_{\bj(l)} (\hat{m}_{\bj(l)}) >  -1.
\]
Hence, $\epsilon > -1$, which implies that
\[
R^*_{\bj(l)}+1 \geq \epsilon+1 > 0.
\]
Owing to the condition of the boundedness of $f^{(3)}(x)$ on $(\epsilon+1, \infty)$, we have $B_{\bj(l)}\ (>0)$  such that 
\[
|f^{(3)}(1+R^*_{\bj(l)})| \leq B_{\bj(l)}.
\]
Notice that 
\begin{equation}
\label{eval_f^(3)}
\begin{aligned}
&\Bigl |E\Bigl[f^{(3)}(1+R^*_{\bj(l)}) m_{\bj(l)} R_{\bj(l)}^3\Bigr]\Bigr |\\
&\leq E\Bigl[\Bigl |f^{(3)}(1+R^*_{\bj(l)}) m_{\bj(l)} R_{\bj(l)}^3\Bigr|\Bigr]\\
&\leq m_{\bj(l)} B_{\bj(l)} E\Bigl[R_{\bj(l)}^3\Bigr].\\
\end{aligned}
\end{equation}
We decompose $\hat{m}_{\bj(l)}$ as follows.
\begin{align*}
\hat{m}_{\bj(l)} &= P(X_i \in I_{\bj(i)},\ i=1,\ldots,l)\\
&= P(X_1 \in I_{\bj(1)}) P(X_2 \in I_{\bj(2)} | X_1 \in I_{\bj(1)}) \\
&\quad \times P(X_3 \in I_{\bj(3)} | X_1 \in I_{\bj(1)}, X_2 \in I_{\bj(2)})\\
& \hspace{30mm} \vdots \\
&\quad \times P(X_l \in I_{\bj(l)} | X_1 \in I_{\bj(1)}, \ldots, X_{l-1} \in I_{\bj(l-1)}) \\
&= \prod_{i=1}^l P(X_i \in I_{\bj(i)} | X \in A_{\bj(i-1)}).
\end{align*}
Let the cumulative distribution function of $X_{i}$ under the condition $X \in A_{\bj(i-1)}$ be denoted by $F_{i}(x | A_{\bj(i-1)})$, that is,
\[
F_{i}(x | A_{\bj(i-1)}) = P(X_{i} \leq x | X \in A_{\bj(i-1)}).
\]
The random variables are defined as follows. For $s=1,\ldots,p_{\bj(i-1)}$,
\[
U_{\bj(i-1)i} (\tilde{n}_{\bj(i-1),s}) \triangleq F_{i}(X_{\bj(i-1)i} (\tilde{n}_{\bj(i-1),s}) | A_{\bj(i-1)}). 
\]
$U_{\bj(i-1)i} (\tilde{n}_{\bj(i-1),s}) $ has the same distribution as the $\tilde{n}_{\bj(i-1),s}$ th-order statistics (in ascending order) obtained from an independent sample of size $n(A_{\bj(i-1)})$ from the uniform distribution in the interval $(0, 1)$.
If we normalize these statistics as follows:
\[
\Delta_{\bj(i-1)i}(\tilde{n}_{\bj(i-1),s}) \triangleq \sqrt{n(A_{\bj(i-1)})}(U_{\bj(i-1)} (\tilde{n}_{\bj(i-1),s})-s/(p_{\bj(i-1)}+1)),
\]
$\bigl(\Delta_{\bj(i-1)i}(\tilde{n}_{\bj(i-1),s})\bigr)_{s=1,\ldots,p_{\bj(i-1)}}$ converges to a $p_{\bj(i-1)}$-dimensional normal distribution. (refer to \ [5.4.13] of \cite{Lehmann}).   
Now, we have:
\begin{align*}
&P(X_i \in I_{\bj(i)} | X \in A_{\bj(i-1)}) \\
&= F_{i}(X_{\bj(i-1)i} (\tilde{n}_{\bj(i-1),j_i+1}) | A_{\bj(i-1)}) - F_{i}(X_{\bj(i-1)i} (\tilde{n}_{\bj(i-1),j_i}) | A_{\bj(i-1)}) \\
&= U_{\bj(i-1)i} (\tilde{n}_{\bj(i-1),j_i+1}) - U_{\bj(i-1)i} (\tilde{n}_{\bj(i-1),j_i})\\
&= \frac{1}{p_{\bj(i-1)}+1} + \frac{1}{\sqrt{n(A_{\bj(i-1)})}} \bigl(\Delta_{\bj(i-1)i}(\tilde{n}_{\bj(i-1),j_i+1})-\Delta_{\bj(i-1)i}(\tilde{n}_{\bj(i-1),j_i})\bigr)\\
&= \frac{1}{p_{\bj(i-1)}+1} + \frac{\sqrt{p'_{\bj(i-2)}+1}}{\sqrt{n}} \bigl(\Delta_{\bj(i-1)i}(\tilde{n}_{\bj(i-1),j_i+1})-\Delta_{\bj(i-1)i}(\tilde{n}_{\bj(i-1),j_i})\bigr)+o_p(1/\sqrt{n}).
\end{align*}
The last equation holds because
\begin{align*}
n(A_{\bj(i-1)})& = \frac{n}{(p+1)(p_{\bj(1)}+1)(p_{\bj(2)}+1)\cdots (p_{\bj(i-2)}+1)}+ O_p(1)\\
&= \frac{n}{p'_{\bj(i-2)}+1}+ O_p(1),
\end{align*}
where
\[
p'_{\bj(i-2)} = 
\begin{cases} 
(p+1)(p_{\bj(1)}+1)\cdots(p_{\bj(i-2)}+1)-1&\text{ if $i\geq 3$,} \\
p&\text{ if $i=2$,} \\
0&\text{ if $i=1$.} \\
\end{cases}
\]
Therefore, we have
\begin{align*}
&\hat{m}_{\bj(l)}\\
&=\prod_{i=1}^l \Bigl\{ \frac{1}{p_{\bj(i-1)}+1} + \frac{\sqrt{p'_{\bj(i-2)}+1}}{\sqrt{n}} \bigl(\Delta_{\bj(i-1)i}(\tilde{n}_{\bj(i-1),j_i+1})-\Delta_{\bj(i-1)i}(\tilde{n}_{\bj(i-1),j_i})\bigr)\\
&\qquad\qquad+o_p(1/\sqrt{n})\Bigr\}\\
&=\frac{1}{p'_{\bj(l-1)}+1} + \frac{1}{p'_{\bj(l-1)}+1}n^{-1/2} \\
&\quad \times\Bigl\{\sum_{i=1}^l \bigl(\Delta_{\bj(i-1)i}(\tilde{n}_{\bj(i-1),j_i+1})-\Delta_{\bj(i-1)i}(\tilde{n}_{\bj(i-1),j_i})\bigr)\sqrt{p'_{\bj(i-2)}+1}(p_{\bj(i-1)}+1)\Bigr\} \\
&\quad +o_p(1/\sqrt{n}).
\end{align*}

Now, we have
\begin{equation}
\label{expression_R}
\begin{split}
R_{\bj(l)} &= \frac{1}{\sqrt{n}} \Bigl\{\sum_{i=1}^l \bigl(\Delta_{\bj(i-1)i}(\tilde{n}_{\bj(i-1),j_i+1})-\Delta_{\bj(i-1)i}(\tilde{n}_{\bj(i-1),j_i})\bigr)\\
&\qquad\qquad\times\sqrt{p'_{\bj(i-2)}+1}
(p_{\bj(i-1)}+1)\Bigr\} \\
&\qquad+o_p(1/\sqrt{n}).
\end{split}
\end{equation}
Furthermore,
\begin{align*}
& m_{\bj(l)} R^2_{\bj(l)} \\
& = \frac{1}{p'_{\bj(l-1)}+1} \\
&\qquad \times \Bigl\{\sum_{i=1}^l \bigl(\Delta_{\bj(i-1)i}(\tilde{n}_{\bj(i-1),j_i+1})-\Delta_{\bj(i-1)i}(\tilde{n}_{\bj(i-1),j_i})\bigr)^2
(p'_{\bj(i-2)}+1)(p_{\bj(i-1)}+1)^{2} \\
&\qquad \qquad +2\sum_{1 \leq s < t \leq l} \bigl(\Delta_{\bj(s-1)s}(\tilde{n}_{\bj(s-1),j_s+1})-\Delta_{\bj(s-1)s}(\tilde{n}_{\bj(s-1),j_s})\bigr) \\
&\hspace{30mm}\times\bigl(\Delta_{\bj(t-1)t}(\tilde{n}_{\bj(t-1),j_t+1})-\Delta_{\bj(t-1)t}(\tilde{n}_{\bj(t-1),j_t})\bigr)\\
&\hspace{30mm}\times(p'_{\bj(s-2)}+1)^{1/2}(p'_{\bj(t-2)}+1)^{1/2}
(p_{\bj(s-1)}+1)(p_{\bj(t-1)}+1)\Bigr\} \\
&\qquad+o_p(1/n).
\end{align*}

For $1 \leq s < t \leq l$, we have
\begin{align*}
&E\Bigl[\bigl(\Delta_{\bj(s-1)s}(\tilde{n}_{\bj(s-1),j_s+1})-\Delta_{\bj(s-1)s}(\tilde{n}_{\bj(s-1),j_s})\bigr) \\
&\qquad\times\bigl(\Delta_{\bj(t-1)t}(\tilde{n}_{\bj(t-1),j_t+1})-\Delta_{\bj(t-1)t}(\tilde{n}_{\bj(t-1),j_t})\bigr)\Bigr]\\
&= E_{X_1,\ldots,X_{t-1}}\Bigl[ E\Bigl[ \bigl(\Delta_{\bj(s-1)s}(\tilde{n}_{\bj(s-1),j_s+1})-\Delta_{\bj(s-1)s}(\tilde{n}_{\bj(s-1),j_s})\bigr) \\
&\qquad\times\bigl(\Delta_{\bj(t-1)t}(\tilde{n}_{\bj(t-1),j_t+1})-\Delta_{\bj(t-1)t}(\tilde{n}_{\bj(t-1),j_t})\bigr)\,\Big| X_1,\ldots,X_{t-1}\Bigr]\Bigr],\\
\end{align*}
where $E_{X_1,\ldots,X_{t-1}}[ \cdot ]$ and $E[\cdot | X_1,\ldots,X_{t-1}]$ denote the expectation with respect to $(X_{1\tau},\ldots X_{t-1\tau})_{\tau=1,\ldots,n}$ and the conditional expectation when $(X_{1\tau},\ldots X_{t-1\tau})_{\tau=1,\ldots,n}$, respectively.

Notably,
\begin{align*}
&\Bigl[ E\Bigl[ \bigl(\Delta_{\bj(s-1)s}(\tilde{n}_{\bj(s-1),j_s+1})-\Delta_{\bj(s-1)s}(\tilde{n}_{\bj(s-1),j_s})\bigr) \\
&\qquad\times\bigl(\Delta_{\bj(t-1)t}(\tilde{n}_{\bj(t-1),j_t+1})-\Delta_{\bj(t-1)t}(\tilde{n}_{\bj(t-1),j_t})\bigr)\,\Big| X_1,\ldots,X_{t-1}\Bigr]\\
&= \bigl(\Delta_{\bj(s-1)s}(\tilde{n}_{\bj(s-1),j_s+1})-\Delta_{\bj(s-1)s}(\tilde{n}_{\bj(s-1),j_s})\bigr) \\
&\qquad\times E\Bigl[ \bigl(\Delta_{\bj(t-1)t}(\tilde{n}_{\bj(t-1),j_t+1})-\Delta_{\bj(t-1)t}(\tilde{n}_{\bj(t-1),j_t})\bigr)\,\Big| X_1,\ldots,X_{t-1}\Bigr]
\end{align*}

The first- and second-order conditional moments of $U_{\bj(i-1)i}(\tilde{n}_{\bj(i-1),u}),\ i=1,\ldots,l,\ u=1,\ldots,p_{\bj(i-1)}$  are given by:
\begin{align*}
E\bigl[U_{\bj(i-1)i}(\tilde{n}_{\bj(i-1),u})| X_1,\ldots,X_{i-1}\bigr]&= \frac{\tilde{n}_{\bj(i-1),u}}{n(A_{\bj(i-1)})+1}\\
E\bigl[\bigl(U_{\bj(i-1)i}(\tilde{n}_{\bj(i-1),u})\bigr)^2| X_1,\ldots,X_{i-1}\bigr]&=
\frac{\tilde{n}_{\bj(i-1),u}(\tilde{n}_{\bj(i-1),u}+1)}{\bigl(n(A_{\bj(i-1)})+1\bigr)\bigl(n(A_{\bj(i-1)})+2\bigr)}.\\
\end{align*}
(Refer to the Appendix of  \cite{Sheena_3} for more details).  
For  $i=1,\ldots,k,\ u=1,\ldots,p_{\bj(i-1)}$, let 
\[
r_{\bj(i-1),u} = n(A_{\bj(i-1)})u/(p_{\bj(i-1)}+1) - \tilde{n}_{\bj(i-1),u}.
\]
Then, we have
\begin{align*}
&E\Bigl[ \Delta_{\bj(i-1)i}(\tilde{n}_{\bj(i-1),u})\,\Big| X_1,\ldots,X_{i-1}\Bigr]\\
&= \sqrt{n(A_{\bj(i-1)})} \Bigl\{E\Bigl[ U_{\bj(i-1)i}(\tilde{n}_{\bj(i-1),u})\,\Big| X_1,\ldots,X_{i-1}\Bigr]-\frac{u}{p_{\bj(i-1)}+1}\Bigr\}\\
&= \sqrt{n(A_{\bj(i-1)})} \Bigl\{\frac{\tilde{n}_{\bj(i-1),u}}{n(A_{\bj(i-1)})+1}-\frac{u}{p_{\bj(i-1)}+1}\Bigr\}\\
&= \sqrt{n(A_{\bj(i-1)})}\Bigl\{\frac{n(A_{\bj(i-1)})u/(p_{\bj(i-1)}+1) - r_{\bj(i-1),u}}{n(A_{\bj(i-1)})+1}-\frac{u}{p_{\bj(i-1)}+1}\Bigr\}\\
&= \sqrt{n(A_{\bj(i-1)})}\Bigl\{-\frac{1}{n(A_{\bj(i-1)})+1}\Bigl(\frac{u}{p_{\bj(i-1)}+1}+r_{\bj(i-1),u}\Bigr)\Bigr\},
\end{align*}
which is equal to $O_p(n^{-1/2})$ because $n(A_{\bj(i-1)})=O_p(n)$. Accordingly, we find that
\begin{equation}
\label{Expec_cross}
\begin{split}
&E\Bigl[\bigl(\Delta_{\bj(s-1)s}(\tilde{n}_{\bj(s-1),j_s+1})-\Delta_{\bj(s-1)s}(\tilde{n}_{\bj(s-1),j_s})\bigr) \\
&\qquad\times\bigl(\Delta_{\bj(t-1)t}(\tilde{n}_{\bj(t-1),j_t+1})-\Delta_{\bj(t-1)t}(\tilde{n}_{\bj(t-1),j_t})\bigr)\Bigr]=O(n^{-1}).
\end{split}
\end{equation}
We also have
\begin{equation} 
\label{Expec_square}
\begin{split}
&E[\bigl(\Delta_{\bj(i-1)i}(\tilde{n}_{\bj(i-1),j_i+1})-\Delta_{\bj(i-1)i}(\tilde{n}_{\bj(i-1),j_i})\bigr)^2]\\
&=\frac{1}{p_{\bj(i-1)}+1}-\frac{1}{(p_{\bj(i-1)}+1)^2}+O(n^{-1})=\frac{p_{\bj(i-1)}}{(p_{\bj(i-1)}+1)^2}+O(n^{-1})
\end{split}
\end{equation}
for $i=1,\ldots,l$. (Refer to the last paragraph on  P25 of \cite{Sheena_3} for more details.)
From \eqref{Expec_cross} and \eqref{Expec_square}, we have
\begin{align}
E[ m_{\bj(l)} R^2_{\bj(l)}]&=\frac{1}{n(p'_{\bj(l-1)}+1)}\sum_{i=1}^lp_{\bj(i-1)}(p'_{\bj(i-2)}+1)+o(n^{-1})\nonumber\\
&=\frac{1}{n(p'_{\bj(l-1)}+1)}\sum_{i=1}^l \{(p_{\bj(i-1)}+1)(p'_{\bj(i-2)}+1)-(p'_{\bj(i-2)}+1)\}+o(n^{-1})\nonumber\\
&=\frac{1}{n(p'_{\bj(l-1)}+1)}\sum_{i=1}^l \{(p'_{\bj(i-1)}+1)-(p'_{\bj(i-2)}+1)\}+o(n^{-1})\nonumber\\
&=\frac{1}{n(p'_{\bj(l-1)}+1)} \Bigl((p'_{\bj(l-1)}+1)-1\Bigr)+o(n^{-1})\nonumber \\
&=\frac{1}{n}\Bigl(1-\frac{1}{(p'_{\bj(l-1)}+1)}\Bigr)+o(n^{-1}).\nonumber
\end{align}
Furthermore, the following equation holds:
\begin{equation}
\label{eval_E[R^2]}
\frac{1}{2}\sum_{\bj(l)}E[ m_{\bj(l)} R_{\bj(l)}^2] \\
=\frac{1}{2n} \Bigl(p'+1-\sum_{\bj(l)}\frac{1}{(p'_{\bj(l-1)}+1)}\Bigr)+o(n^{-1})=\frac{p'}{2n}+o(n^{-1})
\end{equation}
because
\begin{align*}
\sum_{\bj(l)}\frac{1}{p'_{\bj(l-1)}+1} &= \sum_{\bj(l-1)}\frac{p_{\bj(l-1)}+1}{(p'_{\bj(l-2)}+1)(p_{\bj(l-1)}+1)}\\
& = \sum_{\bj(l-1)}\frac{1}{p'_{\bj(l-2)}+1} \\
&= \sum_{\bj(l-2)}\frac{1}{p'_{\bj(l-3)}+1} \\
&\quad\vdots \\
& =  1.
\end{align*}
Due to \eqref{expression_R}, we have $E[R^3_{\bj(l)}] = o(n^{-1})$. From \eqref{eval_f^(3)}, we find that
\begin{equation}
\label{eval_E[R^3]}
E[f^{(3)}(1+R^*_{\bj(l)}) m_{\bj(l)} R_{\bj(l)}^3]= o(n^{-1}),
\end{equation}
From \eqref{expan_D_f}, \eqref{eval_E[R^2]}, and \eqref{eval_E[R^3]}, \eqref{ED_P_asymp} is obtained.
\subsection{Complement to Theorem 4}
Let 
\[
\mf{} \triangleq (\mf{0},\ldots,\mf{p}),\quad \mf{i}\triangleq P(\xf \in I_i),\ i=0,\ldots,p.
\]
From the relation
\begin{align*}
&D[m^{(1|2)} : m] \\
&= 2\sum_{i=0}^p \Bigl(\sqrt{m_i^{(1|2)}} - \sqrt{\mf{i}} +\sqrt{\mf{i}} -\sqrt{\mfe{i}} +\sqrt{\mfe{i}} - \sqrt{m_i}\Bigr)^2\\
&= D[m^{(1|2)} : \mf{}] + D[\mf{} : \mfe{} ] + D[\mfe{} : m] \\
&\qquad + 4\Bigl(\sqrt{m_i^{(1|2)}} - \sqrt{\mf{i}} \Bigr)\Bigl(\sqrt{\mf{i}}-\sqrt{m_i}\Bigr) \\
&\qquad + 4\Bigl(\sqrt{\mf{i}}-\sqrt{\mfe{i}}\Bigr)\Bigl(\sqrt{\mfe{i}} -\sqrt{m_i}\Bigr),
\end{align*}
we have 
\begin{equation}
\begin{aligned}
&D[m^{(1|2)} : m] - D[\mfe{} : m ] \\
&= D[m^{(1|2)} : \mf{}] + D[\mf{} : \mfe{} ] \\
&\qquad + 4\Bigl(\sqrt{m_i^{(1|2)}} - \sqrt{\mf{i}} \Bigr)\Bigl(\sqrt{\mf{i}}-\sqrt{m_i}\Bigr) \\
&\qquad + 4\Bigl(\sqrt{\mf{i}}-\sqrt{\mfe{i}}\Bigr)\Bigl(\sqrt{\mfe{i}} -\sqrt{m_i}\Bigr).
\end{aligned}
\end{equation}
Note that 
\[
\mf{i} = F_{\xf} (\xs(\tilde{n}_{i+1})) - F_{\xf} (\xs(\tilde{n}_i))\ \text{(see \eqref{def_I_i} for $\tilde{n}_{i}$)},
\]
where $F_{\xf}$ denotes the cumulative distribution function of the mother distribution.
$\mf{i}$ is expanded as a function of $\xs(\tilde{n}_{i+1})$ (or $\xs(\tilde{n}_{i})$) around $\xi^{(2)}_{i+1}$ (or $\xi^{(2)}_{i}$) as follows: 
\begin{align*}
\mf{i}&=F_{\xf} (\xi^{(2)}_{i+1}) - F_{\xf} (\xi^{(2)}_{i}) \\
&\qquad + f_{\xf}(\xi^{(2)}_{i+1})(\xs(\tilde{n}_{i+1})-\xi^{(2)}_{i+1}) -  f_{\xf}(\xi^{(2)}_{i})(\xs(\tilde{n}_{i})-\xi^{(2)}_{i}) + o_p(n_1^{-1/2})\\
&= m^{(1|2)}_i + f_{\xf}(\xi^{(2)}_{i+1})(\xs(\tilde{n}_{i+1})-\xi^{(2)}_{i+1}) -  f_{\xf}(\xi^{(2)}_{i})(\xs(\tilde{n}_{i})-\xi^{(2)}_{i}) + o_p(n_1^{-1/2}).\\
\end{align*}
Therefore, $\mf{i}-m^{(1|2)}_i$ depends on the value of the probability density function of the mother distribution in the first-order asymptotic, similar to $D[m^{(1|2)} : m] - D[\mfe{} : m ]$.

\begin{table}[p]
\caption{$\alpha = \beta = 0,\ n_2=10^7$}
\label{albeta_0}
\centering
\begin{tabular}{c|c|c|c|c}
& $n_1=10^7$ &  $n_1=10^6$ &  $n_1=10^5$ & $n_1=10^4$ \\
\hline
$D[\mfe{} : m]$ & $7.47*10^{-6}$   & $3.47*10^{-5}$  & $3.20*10^{-4}$ &  $3.35*10^{-3}$\\
\hline
L.H.S. of \eqref{inequality_closeness} & $7.11*10^{-3}$  & $7.17*10^{-3} $   & $7.73*10^{-3}$   & $1.36*10^{-3}$   \\
\end{tabular}
\end{table}
\begin{table}[p]
\caption{$\alpha = \beta = 0.01,\ n_2=10^7$}
\label{albeta_001}
\centering
\begin{tabular}{c|c|c|c|c}
& $n_1=10^7$ &  $n_1=10^6$ &  $n_1=10^5$ & $n_1=10^4$ \\
\hline
$D[\mfe{} : m]$ & $2.43*10^{-4}$   & $2.79*10^{-4}$  & $5.53*10^{-4}$ &  $3.49*10^{-3}$\\
\hline
L.H.S. of \eqref{inequality_closeness} & $7.35*10^{-3}$  & $7.41*10^{-3} $   & $7.97*10^{-3}$   & $1.37*10^{-2}$   \\
\end{tabular}
\end{table}
\begin{table}[p]
\caption{$\alpha = \beta = 0.1,\ n_2=10^7$}
\label{albeta_01}
\centering
\begin{tabular}{c|c|c|c|c}
& $n_1=10^7$ &  $n_1=10^6$ &  $n_1=10^5$ & $n_1=10^4$ \\
\hline
$D[\mfe{} : m]$ & $2.05*10^{-2}$   & $2.06*10^{-2}$  & $2.16*10^{-2}$ &  $2.47*10^{-2}$\\
\hline
L.H.S. of \eqref{inequality_closeness} & $2.77*10^{-2}$  & $2.77*10^{-2} $   & $2.90*10^{-2}$   & $3.49*10^{-2}$   \\
\end{tabular}
\end{table}
\begin{table}[p]
\caption{$\alpha = \beta = 1.0,\ n_2=10^7$}
\label{albeta_1}
\centering
\begin{tabular}{c|c|c|c|c}
& $n_1=10^7$ &  $n_1=10^6$ &  $n_1=10^5$ & $n_1=10^4$ \\
\hline
$D[\mfe{} : m]$ & $6.92*10^{-1}$   & $6.91*10^{-1}$  & $6.99*10^{-1}$ &  $6.97*10^{-1}$\\
\hline
L.H.S. of \eqref{inequality_closeness} & $6.99*10^{-1}$   & $6.98*10^{-1}$  & $7.07*10^{-1}$ &  $7.07*10^{-1}$\\
\end{tabular}
\end{table}
\clearpage
\begin{table}
\caption{$\alpha=\beta=0.1, n_1=10^3, n_2=10^7$ , Two-dimensional Marginal Distributions}
\label{2dim_n1=10^3}
    \centering
    \begin{tabular}{|l|l|l|l|l|l|l|l|l|l|}
    \hline
           & 2 & 3 & 4 & 5 & 6 & 7 & 8 & 9 & 10 \\ \hline
        1 & 0.032  & 0.029  & 0.030  & 0.023  & 0.027  & 0.026  & 0.027  & 0.024  & 0.024  \\ \hline
        2 & *  & 0.028  & 0.031  & 0.032  & 0.028  & 0.034  & 0.036  & 0.030  & 0.034  \\ \hline
        3 & *  & *  & 0.029  & 0.031  & 0.029  & 0.034  & 0.037  & 0.031  & 0.032  \\ \hline
        4 & *  & *  & *  & 0.032  & 0.031  & 0.029  & 0.031  & 0.038  & 0.030  \\ \hline
        5 & *  & *  & *  & *  & 0.029  & 0.026  & 0.027  & 0.034  & 0.032  \\ \hline
        6 & *  & *  & *  & *  & *  & 0.029  & 0.029  & 0.032  & 0.028  \\ \hline
        7 & *  & *  & *  & *  & *  & *  & 0.034  & 0.035  & 0.031  \\ \hline
        8 & *  & *  & *  & *  & *  & *  & *  & 0.037  & 0.032  \\ \hline
        9 & *  & *  & *  & *  & *  & *  & *  & *  & 0.032 \\ \hline
    \end{tabular}
\end{table}
\begin{table}
\caption{$\alpha=\beta=0.1, n_1=10^4, n_2=10^7$ Two-dimensional Marginal Distributions}
\label{2dim_n1=10^4}
    \centering
    \begin{tabular}{|l|l|l|l|l|l|l|l|l|l|}
    \hline
          & 2 & 3 & 4 & 5 & 6 & 7 & 8 & 9 & 10 \\ \hline
        1 & 0.018  & 0.017  & 0.017  & 0.016  & 0.016  & 0.016  & 0.015  & 0.016  & 0.017  \\ \hline
        2 & *  & 0.019  & 0.018  & 0.016  & 0.016  & 0.015  & 0.015  & 0.016  & 0.016  \\ \hline
        3 & *  & *  & 0.019  & 0.017  & 0.019  & 0.017  & 0.017  & 0.017  & 0.019  \\ \hline
        4 & *  & *  & *  & 0.018  & 0.017  & 0.018  & 0.017  & 0.018  & 0.019  \\ \hline
        5 & *  & *  & *  & *  & 0.018  & 0.017  & 0.016  & 0.016  & 0.017  \\ \hline
        6 & *  & *  & *  & *  & *  & 0.016  & 0.016  & 0.016  & 0.016  \\ \hline
        7 & *  & *  & *  & *  & *  & *  & 0.015  & 0.016  & 0.017  \\ \hline
        8 & *  & *  & *  & *  & *  & *  & *  & 0.016  & 0.018  \\ \hline
        9 & *  & *  & *  & *  & *  & *  & *  & *  & 0.019 \\ \hline
    \end{tabular}
\end{table}

\end{document}